\newtheorem{theorem}{Theorem}[section]
\newtheorem{thm}{Theorem}[section]
\newtheorem{lemma}[thm]{Lemma}
\newtheorem{proposition}[thm]{Proposition}
\theoremstyle{example}
\newtheorem{example}[thm]{Example}
\theoremstyle{definition}
\newtheorem{definition}[thm]{Definition}
\theoremstyle{remark}
\newtheorem{remark}[thm]{Remark}
\numberwithin{equation}{section}
\newcommand{\Real}{\mathbb R}
\newcommand{\Neal}{\mathbb N}
\begin{document}

\title[Invariants for surface-links in entropic magmas]
{On invariants for surface-links in entropic magmas via marked graph diagrams}

\author[S. Choi and S. Kim]{Seonmi Choi and Seongjeong Kim}

\address{Department of Mathematics, Kyungpook National University, Daegu, Korea\\
csm123c@gmail.com\\
Department of Mathematics, Jilin University, Changchun, China \\
kimseongjeong@jlu.edu.cn}

\subjclass{57K12, 57K14}%

\begin{abstract}
M. Niebrzydowski and J. H. Przytycki defined a Kauffman bracket magma and constructed the invariant $P$ of framed links in $3$-space. The invariant is closely related to the Kauffman bracket polynomial. 
The normalized bracket polynomial is obtained from the Kauffman bracket polynomial by the multiplication of indeterminate 
and it is an ambient isotopy invariant for links. 

In this paper, 
we reformulate the multiplication by using a map from the set of framed links to a Kauffman bracket magma 
in order that $P$ is invariant for links in $3$-space.
We define a generalization of a Kauffman bracket magma, which is called a {\it marked Kauffman bracket magma}.
We find the conditions to be invariant under Yoshikawa moves except the first one 
and  
use a map from the set of admissible marked graph diagrams to a marked Kauffman bracket magma
to obtain the invariant for surface-links in $4$-space.
\end{abstract}

\maketitle

\section{Introduction}

The Jones polynomial introduced by V. F. R. Jones in 1984 \cite{Jones} is considered as one of revolutions in Knot theory. 
At first it was constructed by the Artin braid groups and their Markov equivalence by means of a Markov trace on the Temperley-Lieb algebras. 
Simultaneously, it was shown that the Jones polynomial can be calculated by using a skein relation.

In 1985, 
the 2-variable polynomial invariant, called the {\it HOMFLY polynomial}, 
was discovered by J. Hoste, A. Ocneanu, K. Millett, P. J. Freyd, W. B. R. Lickorish and D. N. Yetter 
and this invariant generalizes both the Alexander-Conway polynomial and the Jones polynomial.
They suggested the several approaches to construct the polynomial 
by using the Ocneanu trace defined on the Iwahori-Hecke algebra of type A and a skein relation in \cite{HOMFLY}.
At the same time, J. H. Przytycki and P. Traczyk defined new algebraic structure, called a {\it Conway algebra} in \cite{PrzytyckiTraczyk} and constructed invariants of links valued in a Conway algebra.
It is noticed that the HOMFLY polynomial can be obtained from this invariant 
and it is called the {\it HOMFLYPT polynomial} with the recognition of the work of J. H. Przytycki and P. Traczyk in \cite{PrzytyckiTraczyk}.

On the other hand, the state-sum model is proposed by L. H. Kauffman \cite{Kauffman}, which is known as the {\it Kauffman bracket} in 1987.
The Kauffman bracket polynomial gives a regular isotopy invariant for links. 
The Jones polynomial can be obtained from the Kauffman bracket polynomial by multiplying indeterminate with respect to the writhe of a given link diagram. 
Notice that the multiplication of indeterminate cancel the differences coming from Reidemeister move I.

In 2013, M. Niebrzydowski and J. H. Przytycki \cite{NiebrzydowskiPrzytycki} suggested an invariant constructed via an entropic magma with a sequence satisfying some properties, which is called a {\it Kauffman bracket magma}, and the Kauffman bracket polynomial is one of invariants valued in the Kauffman bracket magma.


A {\it surface-link} is a closed surface embedded in $4$-space and 
it is not easy to imagine surface-links directly.
The {\it marked graph diagram} was introduced by 
S. J. Lomonaco, Jr. \cite{Lomonaco} and K. Yoshikawa \cite{Yoshikawa} to describe surface-links in $4$-space.
It is known that for a given admissible marked graph diagram $D$, one can construct a surface-link $F(D)$
and conversely, every surface-link $F$ can be represented by an admissible marked graph diagram $D$
such that the surface-link $F(D)$ is equivalent to $F$.
K. Yoshikawa introduced local moves 
$\Gamma_{1}, \Gamma_{2}, \Gamma_{3}, \Gamma_{4}, \Gamma_{4}', \Gamma_{5}, \Gamma_{6}, \Gamma_{6}', \Gamma_{7}$ and $\Gamma_{8}$
on marked graph diagrams in \cite{Yoshikawa}, which are called {\it Yoshikawa moves}.
It is known that two marked graph diagrams present the same surface-link if and only if they are related by a finite sequence of Yoshikawa moves.
Then one can construct some invariants for surface-links by applying the construction of invariants for links via marked graph diagrams.


In 2008, S. Y. Lee \cite{SYLee} introduced a method of constructing ambient isotopy invariants for surface-links in $4$-space by using the description of marked graph diagrams. The ambient isotopy invariants for surface-links were constructed from ambient or regular isotopy invariants for links in $3$-space. 


In this paper, 
we deal with skein relations to construct invariant for links and surface-links valued in an {\it entropic} magma. 
The entropic condition is important in the theory of polynomial invariants.

This paper consists of two parts : 
the first part focus on invariants for links in 3-space valued in a Kauffman bracket magma.
More precisely, the Kauffman bracket magma in \cite{NiebrzydowskiPrzytycki} provides the regular isotopy invariant $P$ 
and one of its examples is the Kauffman bracket polynomial. 
It is known that the Kauffman bracket polynomial can be ambient isotopy invariant 
by multiplying additional indeterminate with respect to a given diagram. 
In this part, we reformulate the multiplication of indeterminate 
by using a map $\rho$ from the set of framed links to the Kauffman bracket magma 
and its ``action'' on the invariant $P$ (Theorem \ref{Thm0}).

The second part deals with invariants for surface-links in 4-space values in a generalization of a Kauffman bracket magma, which is called a {\it marked Kauffman bracket magma}, by using marked graph diagrams.
First of all, we construct a function $P_{M}$ from the set $\mathcal{F}$ of admissible marked graph diagrams to a marked Kauffman bracket magma and then determine the conditions for $P_{M}$ to be invariant under Yoshikawa moves except $\Gamma_{1}$.
Similar to Theorem \ref{Thm0}, 
by using a map $\rho$ from $\mathcal{F}$ to the marked Kauffman bracket magma in Theorem \ref{Thm3},
we can obtain the ambient isotopy invariant for surface-links.
In addition, we apply the same method to construct the invariant under $\Gamma_{6}$ and $\Gamma_{6}'$, 
instead of the additional conditions determined by them.

In Section 2, we mention some polynomial invariants for links and 
review basic definitions and results related to surface-links and marked graph diagrams with Yoshikawa moves.
In the end of the section, we introduce a polynomial invariant via marked graph diagrams.
In Section 3, we introduce an invariant for framed links valued in a Kauffman bracket magma 
and modify the invariant to be an ambient isotopy invariant by using a map $\rho$.
In Section 4, we discuss on invariants for surface-links valued in a marked Kauffman bracket magma via marked graph diagrams.
In Section 4.1, we define a marked Kauffman bracket magma and construct an invariant $P_{M}$ under Yoshikawa moves except $\Gamma_{1}, \Gamma_{6}$ and $\Gamma_{6}'$. 
In Section 4.2, we find the conditions in which $P_{M}$ is invariant under $\Gamma_{6}$ and $\Gamma_{6}'$ 
and modify the invariant to be an ambient isotopy invariant by using a map $\rho$.
There are some examples corresponding to theorems in Section 4.3.
We discuss on another way to obtain invariants under $\Gamma_{6}$ and $\Gamma_{6}'$ 
by using a map $\phi$ in Theorem \ref{Thm4} analogously to the case of $\Gamma_{1}$ in Section 4.4.


\section{Preliminaries}

\subsection{Polynomial invariants for links}

There are many polynomial invariants for knots and links in $\mathbb{R}^{3}$.

\begin{definition}[\cite{Jones}]
The {\it Jones polynomial $V_{D}(t)$} of a diagram $D$ of an oriented link $L$ is a polynomial in $\mathbb{Z}[A^{\pm 1}]$ satisfying
\begin{enumerate}
  \item $V(O) =1$ for the standard diagram $O$ of the unknot,
  \item $ t^{-1}V_{D_{+}}(t)-tV_{D_{-}}(t)=(t^{\frac{1}{2}}-t^{- \frac{1}{2}})V_{D_{0}}(t)$,
\end{enumerate}
where ($D_{+},D_{-},D_{0}$) is the Conway skein triple as described in Fig. \ref{SkeinConway(ori)}.
\begin{figure}[h!]
 \centering
 \includegraphics[width = 12cm]{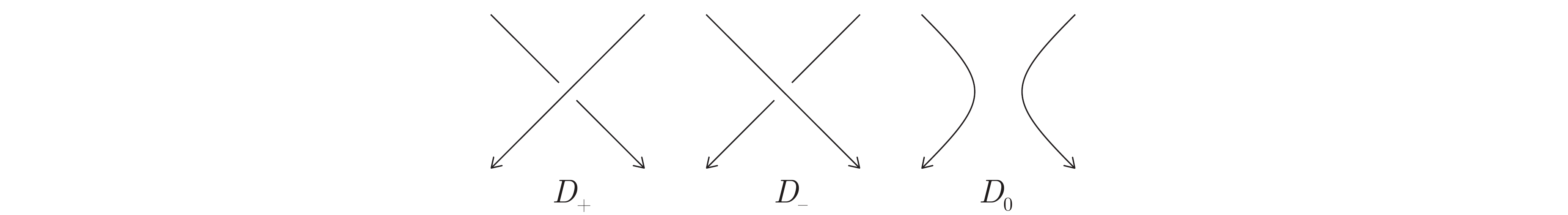}
 \caption{The Conway skein triple ($D_{+},D_{-}$, $D_{0}$)}\label{SkeinConway(ori)}
\end{figure}
\end{definition}

The polynomial $V_{D}(t)$ is an ambient isotopy invariant and denoted by $V_{L}(t)$.
In \cite{Kauffman}, the Jones polynomial was defined by using the Kauffman bracket polynomial.

\begin{definition}[\cite{Kauffman}]
The {\it Kauffman bracket polynomial $\langle D \rangle$} of a diagram $D$ of any unoriented link $L$ 
is a polynomial in $\mathbb{Z}[A^{\pm 1}]$ such that
\begin{enumerate}
\item $\langle O \rangle =1$ for the standard diagram $O$ of the unknot,
\item $\langle O \sqcup D \rangle = (-A^{2} - A^{-2})\langle D \rangle$,
\item $\langle D \rangle = A\langle D_{0} \rangle+A^{-1}\langle D_{\infty} \rangle$,
\end{enumerate}
where ($D$, $D_{0}$, $D_{\infty}$) is the Kauffman skein triple as described in Fig. \ref{SkeinKauffman}.
\begin{figure}[h!]
 \centering
 \includegraphics[width = 12cm]{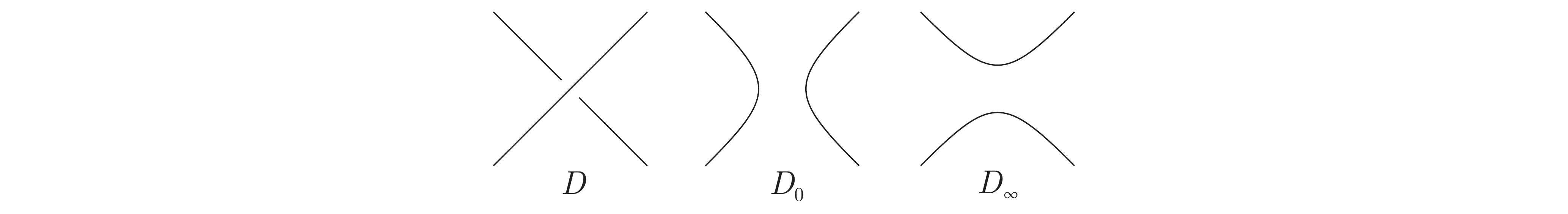}
 \caption{The Kauffman skein triple ($D$, $D_{0}$, $D_{\infty}$)}\label{SkeinKauffman}
\end{figure}
\end{definition}

The bracket polynomial $\langle D \rangle$ is a regular isotopy invariant (i.e. invariant under Reidemeister moves II and III).
That is, it is an invariant for framed links. 
Define a polynomial $X(D)$ by $X(D) = (-A^{3})^{-w(D)}\langle |D|\rangle$, where $w(D)$ is the writhe of $D$ and $|D|$ is the diagram obtained from $D$ forgetting a given orientation.
It is invariant under all Reidemeister moves and denoted by $X(L)$, which is called the {\it normalized bracket polynomial} of $L$.

\begin{remark}
It is well-known that $X(D)|_{A=t^{- \frac{1}{4}}}= V_{D}(t)$. 
Let us notice that the Kauffman bracket polynomial is defined by ``splicings'' of crossings and is a regular isotopy invariant. 
By multiplying $(-A^{3})^{-w(D)}$, one can obtain an ambient isotopy invariant $X(D)$ for links
from the regular isotopy invariant $\langle ~ \rangle$.
In fact the multiplication of $(-A^{3})^{-w(D)}$ is a homomorphism of the Laurent polynomial ring $\mathbb{Z}[A^{\pm 1}]$.
In addition, by changing a variable, we can obtain the Jones polynomial, which is constructed by applying the Conway skein triple.
\end{remark}

\subsection{Surface-links and marked graph diagrams}

Now we review marked graph diagrams as a tool for studying surface-links. 
A {\it surface-link} is a smoothly embedded closed surface  in $\Real^{4}$.
If it is connected, then it is called a {\it surface-knot}.
Two surface-links are said to be {\it equivalent} if they are ambient isotopic in $\Real^{4}$.
A surface-link is called {\it orientable} if the underlying surface is orientable.
Otherwise, we call it {\it non-orientable}.
To deal with surface-links, there are many descriptions for surface-links and see the details in \cite{CarterKamadaSaito,Kamadabook,Kamadabook2,Yoshikawa}. 

In this section, we introduce one of these descriptions for surface-links named by a {\it marked graph diagram} with {\it Yoshikawa moves}.
A {\it marked graph} is a $4$-valent graph embedded in $\Real^3$ such that each vertex is decorated by a line segment, called a {\it marker} as shown in Fig. \ref{MarkedVertex}. (A). 
A vertex with a marker is called a {\it marked vertex}.

\begin{figure}[h!]
 \centering
 \includegraphics[width = 12cm]{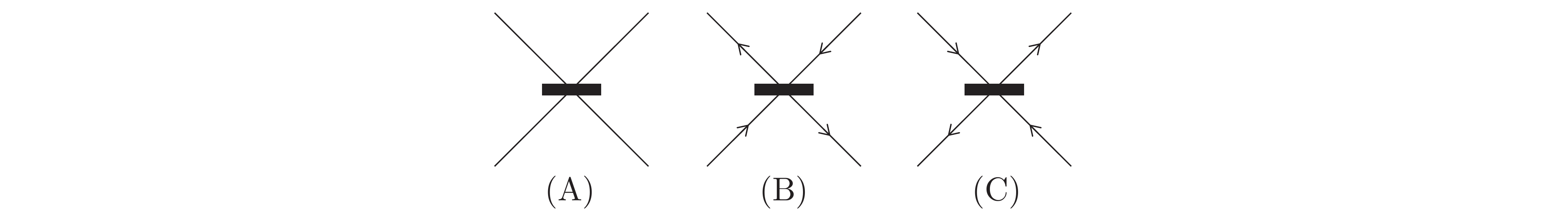}
 \caption{A marked vertex and its orientation}\label{MarkedVertex}
\end{figure}
An orientation of edges incident with a marked vertex is a choice of an orientation as described in Fig. \ref{MarkedVertex} (B) and (C).
A marked graph is {\it orientable} if it admits an orientation.
Otherwise, it is {\it non-orientable}. 
Two (oriented) marked graphs are said to be {\it equivalent} if they are ambient isotopic in $\Real^3$ with keeping the rectangular neighborhoods and markers (with orientation).
A {\it marked graph diagram} can be described by a diagram in $\Real^2$, which is a link diagram with marked vertices.

For a marked graph diagram $D$, 
let $L_{-}(D)$ and $L_{+}(D)$ denote classical link diagrams obtained from $D$ by changing every marked vertex to local diagrams illustrated in Fig. \ref{MarkedVertexSplicing} (A) and (C), respectively.
\begin{figure}[h!]
 \centering
 \includegraphics[width = 12cm]{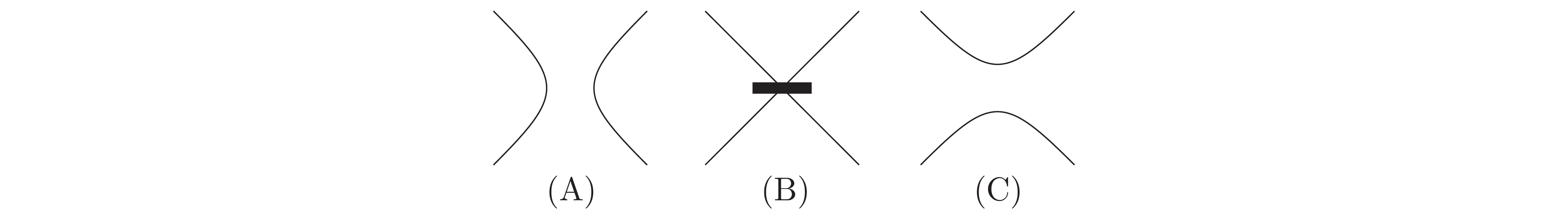}
 \caption{Splicings at a marked vertex}\label{MarkedVertexSplicing}
\end{figure}
Two link diagrams $L_{-}(D)$ and $L_{+}(D)$ are called the {\it negative} and {\it positive resolution} of $D$, respectively. 
A marked graph diagram $D$ is said to be {\it admissible} if both resolutions $L_{-}(D)$ and $L_{+}(D)$ are trivial link diagrams. A marked graph is said to be {\it admissible} if it has an admissible marked graph diagram.
For example, Fig. \ref{MarkedGraph} describes a marked graph diagram of the spun trefoil, its positive and negative resolutions. One can easily check that $L_{+}(D)$ and $L_{-}(D)$ described in Fig. \ref{MarkedGraph} are trivial link diagrams and hence the marked graph diagram of the spun trefoil is admissible.
\begin{figure}[h!]
 \centering
 \includegraphics[width = 12cm]{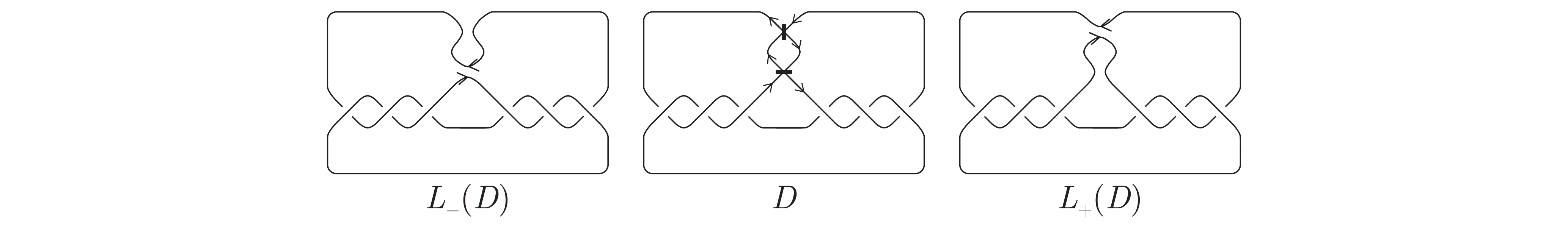}
 \caption{A marked graph diagram of the spun trefoil and its resolutions}\label{MarkedGraph}
\end{figure}

From a given admissible marked graph diagram $D$, 
one can construct a surface-link $F(D)$ and it is uniquely determined from $D$ up to equivalence. 
Conversely, every surface-link $F$ can be represented by an admissible marked graph diagram $D$, that is, $F(D)$ is equivalent to $F$.
See the details in \cite{KawauchiShibuyaSuzuki,Lomonaco,Yoshikawa}. 
For example, the correspondence between the marked graph diagram and the standard projective plane are illustrated in Fig. \ref{RP2}.
\begin{figure}[h!]
 \centering
 \includegraphics[width = 12cm]{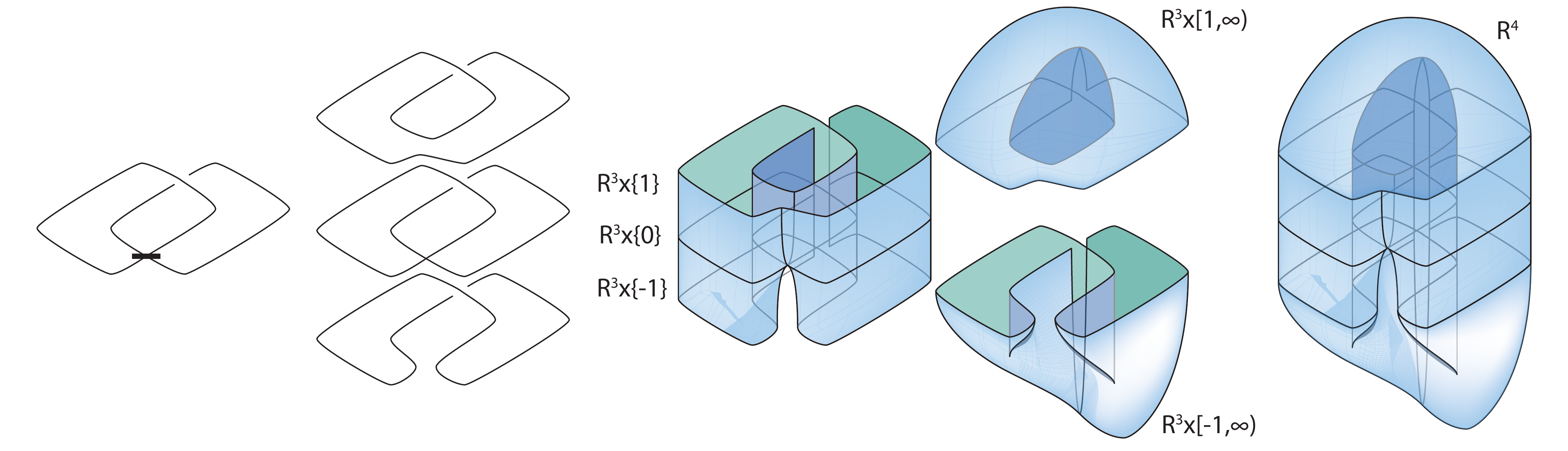}
 \caption{The standard projective plane}\label{RP2}
\end{figure}
There are local moves on marked graph diagrams introduced by K. Yoshikawa \cite{Yoshikawa}, which are called {\it Yoshikawa moves} as depicted in Fig. \ref{UnoriYoshikawaMoves}.
\begin{figure}[h!]
 \centering
 \includegraphics[width = 12cm]{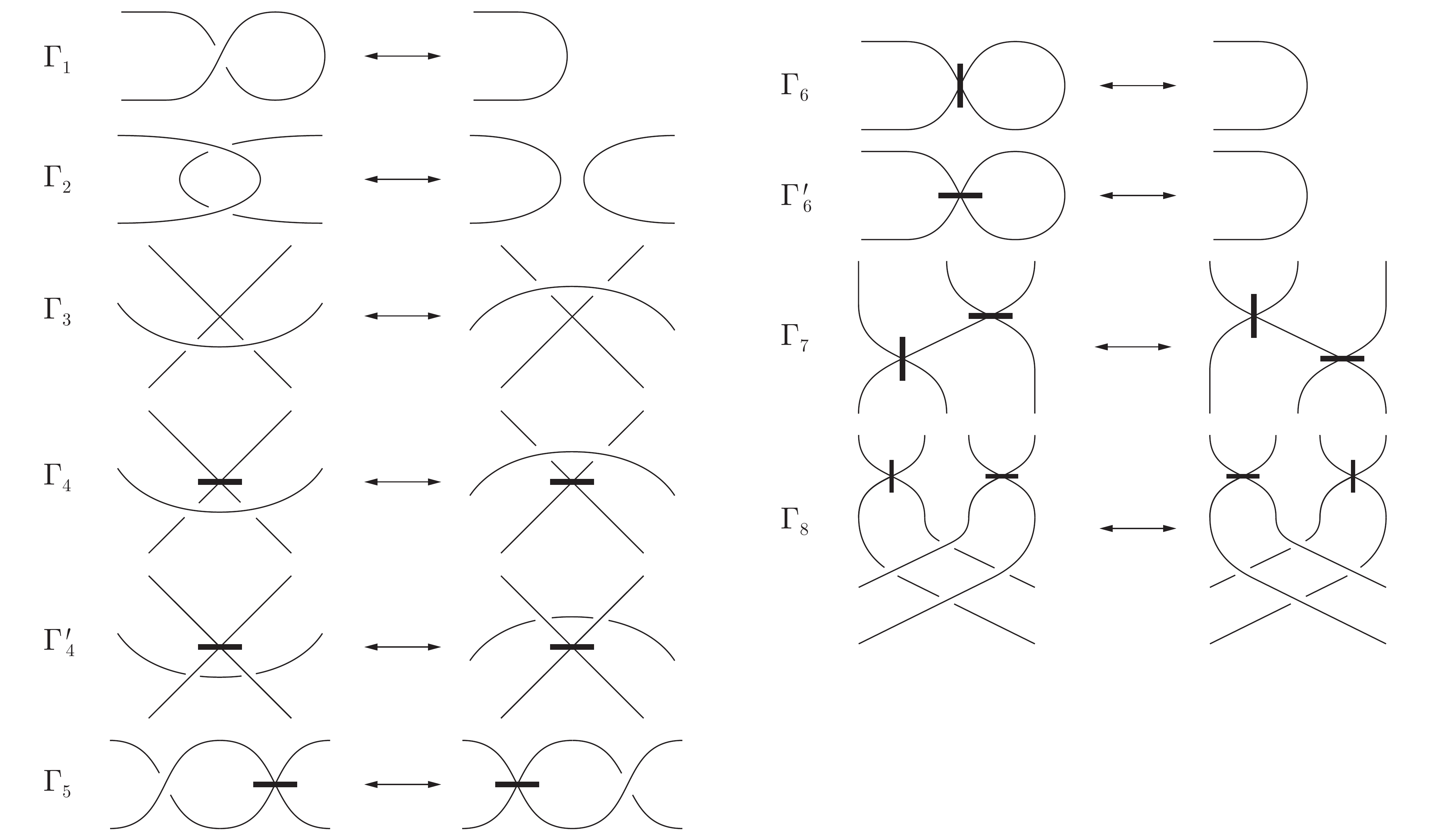}
 \caption{Yoshikawa moves}\label{UnoriYoshikawaMoves}
\end{figure}
Yoshikawa moves of {\it type $1$} (resp.  {\it type $2$}) consists of $\Gamma_{1}, \Gamma_{2}, \cdots, \Gamma_{5}$. 
(resp. $\Gamma_{6}, \Gamma_{6}', \Gamma_{7}, \Gamma_{8}$.)
Two marked graph diagrams present equivalent oriented marked graphs if and only if they are related by a finite sequence of Yoshikawa moves of type $1$.
\begin{proposition}[\cite{KeartonKurlin,Swenton,Yoshikawa}]
Two marked graph diagrams $D$ and $D'$ present equivalent oriented surface-links if and only if $D$ can be obtained from $D'$ by a finite sequence of ambient isotopies in $\mathbb{R}^{2}$ and Yoshikawa moves.
\end{proposition}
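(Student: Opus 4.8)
The plan is to establish both directions through a Morse-theoretic (movie) description of surface-links, built on the band/saddle construction underlying the assignment $D \mapsto F(D)$. Writing $\Real^4 = \Real^3 \times \Real$ with last coordinate $t$, one interprets $D$ as the cross-section at level $t=0$, each marked vertex being a saddle (hyperbolic) point; the positive resolution $L_{+}(D)$ sits just above and the negative resolution $L_{-}(D)$ just below. Admissibility is exactly what lets us cap both trivial links off with disks (maxima above, minima below), producing the closed surface $F(D)$. The whole proof amounts to matching the combinatorics of the listed local moves against the geometry of these critical points.

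For the ``if'' direction I would verify that each generator is realized by an ambient isotopy of $F(D)$. Ambient isotopies in $\Real^2$ and the Reidemeister-type moves among $\Gamma_{2}, \Gamma_{3}, \Gamma_{4}, \Gamma_{4}', \Gamma_{5}$ correspond to isotopies of the marked graph within a single level $\Real^3 \times \{0\}$, hence to isotopies of $F(D)$; $\Gamma_{1}$ is a Reidemeister I type curl realized by a local isotopy. The type $2$ moves $\Gamma_{6}, \Gamma_{6}', \Gamma_{7}, \Gamma_{8}$ correspond to the standard Morse manipulations: exchanging the $t$-levels of two adjacent critical points, and the creation or cancellation of a complementary pair (a saddle together with an adjacent maximum or minimum). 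Each such manipulation is a well-known ambient isotopy of the embedded surface, so it preserves the equivalence class of $F(D)$.

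For the harder ``only if'' direction, suppose $F(D)$ and $F(D')$ are equivalent. I would first isotope each surface so that $t|_F$ is Morse with pairwise distinct critical values, then push minima down and maxima up to collect all saddles on one intermediate level, so that the middle cross-section recovers an admissible marked graph diagram (a hyperbolic splitting). An ambient isotopy from $F(D)$ to $F(D')$ is then analyzed as a one-parameter family of such height functions; generically the cross-section changes only by elementary events---births and deaths of critical points, exchanges of critical levels, and isotopies within a level. The crux is to show that each elementary event, once normalized back to hyperbolic position, is realized by a finite sequence of Yoshikawa moves. This is precisely what Yoshikawa established for the type $1$ moves and what Kearton--Kurlin and Swenton completed for the full generating set, by a case analysis of how the banded presentation transforms under handle slides and critical-point rearrangements.

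The main obstacle is exactly this completeness in the ``only if'' direction: one must guarantee that the finite list $\Gamma_{1}, \dots, \Gamma_{8}$ suffices to realize \emph{every} elementary change of the hyperbolic splitting, with no additional generator required. Controlling the global rearrangement of critical points---in particular, ensuring that the interaction of distant bands and caps can always be localized into one of the listed moves---is the delicate point, and it is where the Morse-theoretic bookkeeping of Kearton--Kurlin and the handle calculus of Swenton carry the real weight of the argument.
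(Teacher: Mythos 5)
The paper does not prove this proposition: it is imported verbatim from the cited references (Yoshikawa for the moves of type $1$, Swenton and Kearton--Kurlin for the full statement), so there is no in-paper argument to compare yours against. Your sketch does correctly identify the framework those references use --- realize $D$ as the $t=0$ cross-section of a hyperbolic splitting, cap off the trivial resolutions, and match Yoshikawa moves against elementary modifications of the height function --- and your ``if'' direction is essentially complete: each of $\Gamma_{1},\dots,\Gamma_{5}$ is an isotopy within a level, and $\Gamma_{6},\Gamma_{6}',\Gamma_{7},\Gamma_{8}$ are births/deaths and rearrangements of complementary critical pairs, all of which visibly preserve the ambient isotopy class of $F(D)$.

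The genuine gap is that your ``only if'' direction is not an argument but a deferral. The two steps that carry all the difficulty are stated and then attributed to Kearton--Kurlin and Swenton rather than carried out: (i) that any two hyperbolic splittings of equivalent surface-links are joined by a generic one-parameter family whose singular events form a \emph{finite, explicitly enumerable} list (this is a Cerf-theoretic normalization, and one must also keep admissibility --- triviality of both resolutions --- throughout, which forces repeated renormalization to the Kawauchi--Shibuya--Suzuki normal form); and (ii) that each such event, after renormalization, factors through $\Gamma_{1},\dots,\Gamma_{8}$ with no additional generator needed. Point (ii) is exactly Yoshikawa's original conjecture, and its resolution required a delicate handle/band calculus (indeed, Swenton's first write-up was considered incomplete and was later repaired); asserting that ``this is precisely what they established'' does not discharge it. As written, your proposal is a correct road map for the theorem, not a proof of it --- which is acceptable only in the same sense that the paper itself treats the proposition as a citation rather than a result it establishes.
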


\subsection{Polynomial invariants for surface-links}

In 2008, S.Y. Lee \cite{SYLee} introduced a construction of ambient isotopy invariants for surface-links in $4$-space 
by using marked graph diagrams and an arbitrary given isotopy or regular isotopy invariant of classical links in $3$-space. 

For a commutative ring $R$ with the additive identity $0$ and the multiplicative identity $1$,
let $\hat{R}$ denote the polynomial ring $R[A_{1}, \cdots, A_{m}]$ ($m \geq 0$).
If $m=0$, then $\hat{R}=R$.
Let $[~ ]$ be a regular or an ambient isotopy invariant of classical links in $3$-space with the values in $\hat{R}$
satisfying the following conditions :
for a classical link diagram $D$ 
and
the standard diagram $O$ of the unknot,
\begin{enumerate}
  \item $[D_{+}]=\alpha [D]$ and $[D_{-}]=\alpha [D]$, 
  \item $[D\sqcup O]=\delta[D]$, 
\end{enumerate}
where 
$D_{+}$ and $D_{-}$ are diagrams obtained from $D$ by adding a positive and negative kink respectively, depicted in Fig. \ref{RM1}, 
$\delta$ is an element $\delta (A_{1}, \cdots, A_{m})\in \hat{R}$ 
and 
$\alpha$ is an invertible element $\alpha(A_{1}, \cdots, A_{m})\in \hat{R}$.
\begin{figure}[h!]
 \centering
 \includegraphics[width = 12cm]{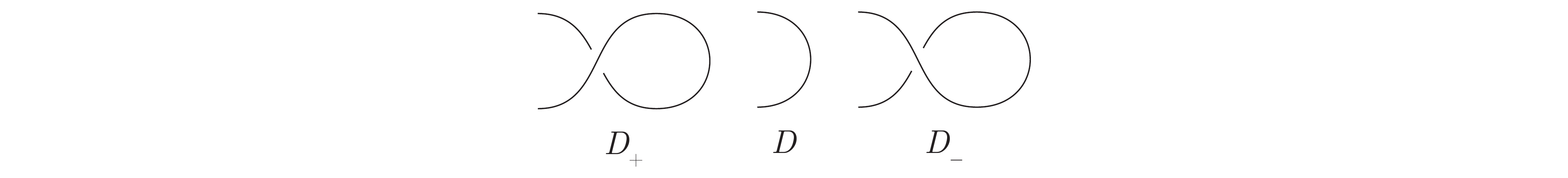}
 \caption{The diagrams of $D_{+}$, $D$ and $D_{-}$}\label{RM1}
\end{figure}

Notice that $[~]$ is an ambient isotopy invariant of classical knots and links if and only if $\alpha=1$.

\begin{definition}[\cite{SYLee}]
Let $D$ be a marked graph diagram. 
Define a polynomial $[[D]]=[[D]](A_{1},\cdots,A_{m}, x, y, z, w)$ in $\hat{R}[x, y, z, w]$ by means of the two rules :
\begin{enumerate}
  \item $[[D]]=[D]$ if $D$ is a classical knot or link diagram.
  \item $[[D^{m}]]=x[[D_{\infty}^{m}]]+y[[D_{+}^{m}]]+z[[D_{-}^{m}]]+w[[D_{0}^{m}]]$ 
\end{enumerate}
where $D^{m}, D_{\infty}^{m}, D_{+}^{m}, D_{-}^{m}$ and $D_{0}^{m}$ are the local diagrams that are identical except the small parts, as depicted in Fig. \ref{SkeinMarkedGen}.

\begin{figure}[h!]
 \centering
 \includegraphics[width = 12cm]{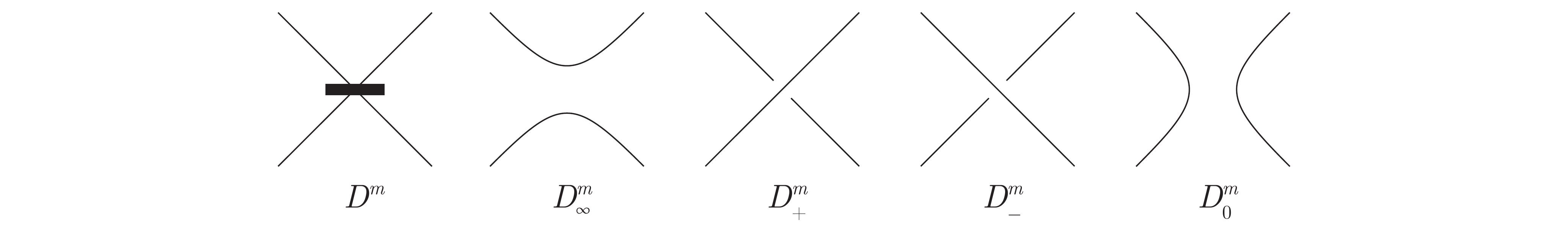}
 \caption{The local diagrams of $D^{m}, D_{\infty}^{m}, D_{+}^{m}, D_{-}^{m}$ and $D_{0}^{m}$}\label{SkeinMarkedGen}
\end{figure}

\end{definition}

\begin{proposition}[\cite{SYLee}]
For any regular (resp. ambient ) isotopy invariant of classical links, 
$[[D]]$ is invariant under Yoshikawa moves $\Gamma_{2}, \cdots, \Gamma_{5}$ (resp. $\Gamma_{1}, \Gamma_{2}, \cdots, \Gamma_{5}$).
\end{proposition}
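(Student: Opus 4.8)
The plan is to exploit that, by rule (2) of the definition, $[[D]]$ is obtained from $D$ by resolving every marked vertex into the four local configurations $D^{m}_{\infty}, D^{m}_{+}, D^{m}_{-}, D^{m}_{0}$ and expanding multilinearly; once all marked vertices are resolved, one is left with a finite $\hat{R}[x,y,z,w]$-linear combination
\[
[[D]]=\sum_{S} c_{S}\,[D_{S}],
\]
where each $D_{S}$ is an honest classical link diagram, $c_{S}$ is a monomial in $x,y,z,w$, and the index $S$ records the choice of resolution at each marked vertex. Since every Yoshikawa move is supported in a disk, I would first record the \emph{locality principle}: the resolutions at marked vertices lying outside the move disk are identical on the two sides of the move and contribute the same monomial factors, so it suffices to compare the two sides after the outside resolutions are fixed, i.e. to check the identity for the local tangles appearing in the move.

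First I would dispose of the moves $\Gamma_{2}$ and $\Gamma_{3}$ (and, in the ambient case, $\Gamma_{1}$), whose move disks contain no marked vertex. Resolving all marked vertices of $D$ and of $D'$ in parallel, each classical diagram $D_{S}$ differs from the corresponding $D'_{S}$ precisely by the underlying classical Reidemeister move R2 or R3 (resp. R1). Because $[~]$ is a regular isotopy invariant we have $[D_{S}]=[D'_{S}]$ for every $S$, whence $[[D]]=[[D']]$ term by term. For $\Gamma_{1}$ the relevant identity is $[D_{+}]=[D]=[D_{-}]$; this is condition (1) with $\alpha=1$, that is, exactly the hypothesis that $[~]$ is an \emph{ambient} isotopy invariant, which is why $\Gamma_{1}$ appears only in the ``resp.'' statement.

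The substantive part is $\Gamma_{4}, \Gamma_{4}'$ and $\Gamma_{5}$, whose move disks contain a marked vertex. Here I would apply rule (2) to the marked vertex inside the disk, turning each side of the move into the same formal combination $x(\cdot)_{\infty}+y(\cdot)_{+}+z(\cdot)_{-}+w(\cdot)_{0}$, and then check the four resolutions separately. For $\Gamma_{4}$ and $\Gamma_{4}'$ (an arc sliding past a marked vertex) the $\infty$- and $0$-resolutions become a pair of arcs together with the transversal strand, related across the move by R2, while the $+$- and $-$-resolutions become a genuine crossing past which the strand slides by R2 (or R2 followed by R3); all of these are respected by the regular isotopy invariant $[~]$. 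For $\Gamma_{5}$ (the R3-type move at a marked vertex) each of the four resolutions becomes a classical triangle move, handled by R3 together with R2. In every case the coefficient attached to a given resolution is the same on both sides, since the identical four-term expansion is used, so equal $[~]$-values are multiplied by equal monomials and the two combinations agree.

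I expect the main obstacle to be precisely this case analysis for $\Gamma_{5}$: one must track over/under information and orientations carefully so that the positive-crossing resolution of one side is matched with the positive-crossing resolution of the other (and likewise for the negative one) through moves in the list R2, R3 that $[~]$ actually preserves, rather than through an R1 that it need not preserve in the regular isotopy case. Once the matching of like resolutions is verified for each local picture, the locality principle assembles these local identities into the global equality $[[D]]=[[D']]$, completing the proof.
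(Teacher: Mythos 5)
The paper does not prove this proposition---it is quoted from \cite{SYLee} without proof---but your argument (resolve every marked vertex by the four-term skein into an $\hat{R}[x,y,z,w]$-linear combination of classical diagrams, invoke locality to restrict attention to the move disk, and match like resolutions on the two sides via R2/R3, with R1 and $\alpha=1$ covering $\Gamma_1$ in the ambient case) is exactly the standard proof and is the same strategy this paper itself uses for its analogous results on $P_M$, namely the well-definedness lemma and the lemma on invariance under $\Gamma_2,\dots,\Gamma_5$. One minor slip: $\Gamma_5$ is not the ``R3-type move at a marked vertex'' (that description fits $\Gamma_4,\Gamma_4'$) but the twist move pairing a marked vertex with an adjacent classical crossing; your plan of checking the four resolutions separately and matching them through R2/R3 is still the right verification for it.
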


By using this construction, one can obtain the polynomial $[[ ~ ]]$ from the Kauffman bracket polynomial $\langle ~ \rangle$, 
as an example.

\begin{example}[\cite{SYLee}]\label{ExaSYLee}
Let $R=\mathbb{Z}[A^{\pm 1}, x, y, z, w]$ denote the polynomial ring in the commutative variables $A, x, y, z, w$ with coefficient in $\mathbb{Z}$. 
For a marked graph diagram $D$,
construct the polynomial $[[D]]$ such that
\begin{enumerate}
  \item $[[D]]=\langle D\rangle (A)$ if $D$ is a classical knot or link diagram,
  \item $[[D_{m}]]=(x+Ay+A^{-1}z)[[D_{\infty}^{m}]]+(A^{-1}y+Az+w)[[D_{0}^{m}]]$.
\end{enumerate}
Then the polynomial $[[D]]$ is invariant under $\Gamma_{2}, \cdots, \Gamma_{5}$.
\end{example}

In order to be invariant under all Yoshikawa moves, 
the polynomials were defined by using $[[D]]$, some new variables and functions.
In particular, there is the calculation for a polynomial corresponding to Example \ref{ExaSYLee}.
For the details, see Theorem 4.3 and Theorem 4.4 in \cite{SYLee}.


\section{Kauffman bracket magmas and their invariants}

\subsection{Kauffman bracket magma and its invariant}

In 2012, M. Niebrzydowski and J. H. Przytycki introduced an algebraic structure, which is called a {\it Kauffman bracket magma} and constructed unoriented framed link invariants. 

\begin{definition}[\cite{Sushkevich}]\label{def-Entropic}
Let $A$ be a set equipped with a binary operation $*$. 
If for all $a, b, c, d \in A$, $(a*b)*(c*d)=(a*c)*(b*d)$, then the pair $(A;*)$ is said to be {\it entropic}.
\end{definition}

The entropic property was used in Knot theory in order to compute some invariants; for examples, Jones and HOMFLY-PT polynomial invariants. 

\begin{definition}[\cite{NiebrzydowskiPrzytycki}]\label{def-KBA}
Let $(A;*)$ be an entropic magma. 
Take a sequence $\{a_{n}\}_{n=1}^{\infty}$ of not necessarily different elements in $A$ satisfying the following conditions : for all $n \in \mathbb{N}$,
\begin{enumerate}
\item $(a_{n+1}*a_{n+2})*(a_{n}*a_{n+1})=a_{n}$,
\item $(a_{n}*a_{n+1})*(a_{n+1}*a_{n})=a_{n+1}$.
\end{enumerate}
Then $(A;*,\{a_{n}\})$ is called a {\it Kauffman bracket magma}.
\end{definition}

The two conditions (1) and (2) of the definition of a Kauffman bracket magma are derived from the Reidemeister move II by the  numerator and denominator closures, respectively.

\begin{figure}[h!]
 \centering
 \includegraphics[width = 12cm]{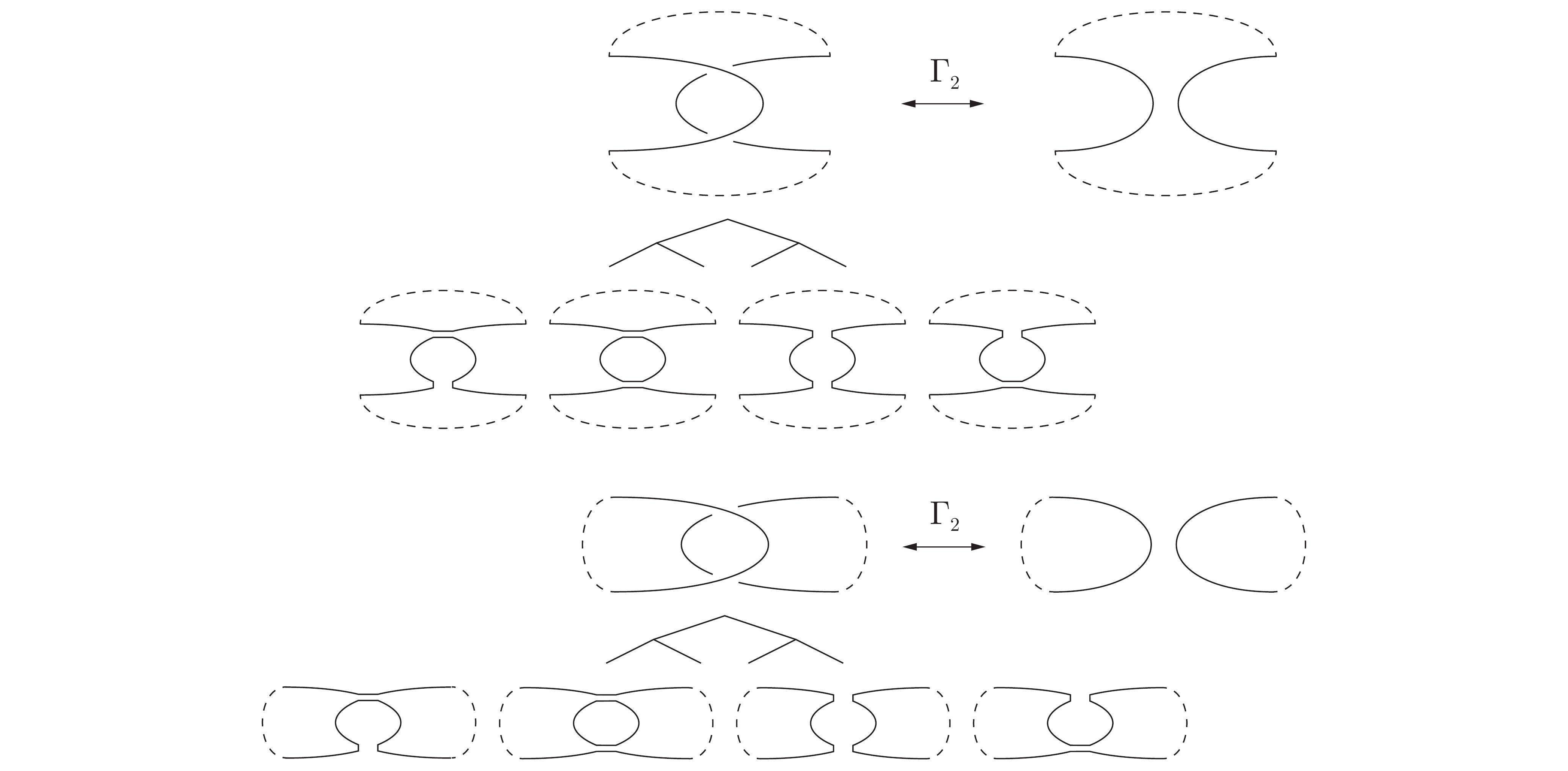}
 \caption{The numerator and denominator closures of Reidemeister move II}
\end{figure}

\begin{proposition}[\cite{NiebrzydowskiPrzytycki}]
Let ${\mathcal L^{fr}}$ be the set of equivalence classes of unoriented links under Reidemeister moves II and III.
For a given Kauffman bracket magma $(A;*,\{a_{n}\})$, there is a unique invariant $P : {\mathcal L^{fr}} \rightarrow A$ for unoriented framed links such that
  \begin{enumerate}
    \item $P(T_{n})=a_{n}$, 
    \item $P(D)=P(D_{0})*P(D_{\infty})$
  \end{enumerate}
where $T_{n}$ is the trivial link diagram with $n$ components 
and $(D, D_{0}, D_{\infty})$ is the Kauffman skein triple as depicted in Fig. \ref{SkeinKauffman}.
\end{proposition}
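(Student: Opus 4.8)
The plan is to establish existence and uniqueness separately, building $P$ first on link diagrams and then checking that it descends to $\mathcal{L}^{fr}$. For existence, I would define $P(D)$ for an unoriented link diagram $D$ by recursion on the number of crossings: if $D$ is crossingless it is a disjoint union of $k$ embedded circles representing $T_k$, and I set $P(D)=a_k$ (well-posed since $k$ is a planar-isotopy invariant); if $D$ has a crossing I choose one and set $P(D)=P(D_0)*P(D_\infty)$ using the two Kauffman smoothings of Fig.~\ref{SkeinKauffman}, each of which has one fewer crossing. Unwinding the recursion, $P(D)$ becomes a fully parenthesized $*$-word whose leaves are the values $a_k$ attached to the $2^{N}$ Kauffman states of $D$, where $N$ is the number of crossings.

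The first verification is that $P(D)$ is independent of the order in which crossings are resolved. It suffices to check that exchanging two crossings $x,x'$ resolved consecutively leaves the value fixed: resolving $x$ then $x'$ produces, locally, the expression $(a*b)*(c*d)$, whereas resolving $x'$ then $x$ produces $(a*c)*(b*d)$, and these agree by the entropic law of Definition~\ref{def-Entropic}. Since an arbitrary reordering is a product of such adjacent transpositions, and the entropic identity may be applied independently at each node of the resolution tree lying above the two crossings, order-independence follows. This same order-independence lets me reduce each Reidemeister move to a purely local computation by resolving all crossings outside the move first.

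Invariance under the Reidemeister moves then splits into two cases. For Reidemeister III, I would observe that once all three crossings of the move are smoothed the over/under information is erased, so each of the eight Kauffman states agrees on the two sides of the move up to planar isotopy; resolving the three crossings in corresponding orders and appealing to order-independence gives equal $*$-words, hence equal values. For Reidemeister II the two smoothed crossings of the tangle no longer reproduce the straightened diagram: after resolving the rest of the diagram to a crossingless background, the four endpoints of the tangle are joined in one of the two non-crossing closure patterns (numerator or denominator), and resolving the two crossings yields four crossingless states whose component counts are $n,n+1,n+2$ for the appropriate $n$. The resulting $*$-words are precisely the left-hand sides of conditions (1) and (2) of Definition~\ref{def-KBA}, which collapse them to $a_n$ and $a_{n+1}$ respectively, exactly the values of the straightened tangle in the two closures. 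Thus $P$ is unchanged by RII and RIII and descends to a map $\mathcal{L}^{fr}\to A$ satisfying conditions (1) and (2) of the statement.

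Uniqueness is a short induction: any invariant $P'$ satisfying the two conditions has $P'(T_n)=a_n=P(T_n)$, and for a diagram with a crossing the relation $P'(D)=P'(D_0)*P'(D_\infty)$ together with induction on the number of crossings forces $P'(D)=P(D)$, so $P'=P$. The main obstacle I anticipate is the Reidemeister II step: one must track precisely how the smoothings merge or split circles so that the indices in conditions (1) and (2) align, and must confirm that the numerator and denominator closures genuinely exhaust the local closure patterns that can arise. By contrast, the order-independence and Reidemeister III invariance are clean consequences of the entropic law alone.
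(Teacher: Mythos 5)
The paper does not prove this proposition itself (it is quoted from \cite{NiebrzydowskiPrzytycki}), so I am assessing your argument on its own terms. Your recursive definition, the order-independence via the entropic law, the Reidemeister II analysis through the two closure patterns, and the uniqueness induction are all sound and are essentially the intended construction. The genuine gap is in your Reidemeister III step, and notably it sits exactly where you claim the argument is ``clean.''

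It is false that the eight Kauffman states of the two sides of an RIII move agree pairwise up to planar isotopy. Writing the two sides as the $3$-string tangles $\sigma_1\sigma_2\sigma_1$ and $\sigma_2\sigma_1\sigma_2$ and smoothing each crossing into $1$ or the Temperley--Lieb cup-cap $e_i$, the left side produces the multiset $\{1,\,e_1,\,e_1,\,e_1,\,e_2,\,e_1e_2,\,e_2e_1,\,e_1\sqcup O\}$ while the right side produces $\{1,\,e_2,\,e_2,\,e_2,\,e_1,\,e_1e_2,\,e_2e_1,\,e_2\sqcup O\}$; a closure of the tangle that separates $e_1$ from $e_2$ (for instance one capping off points $2$ and $3$) gives different component counts, hence different $*$-words, and the entropic law alone cannot identify them. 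Indeed, RIII invariance of the bracket is \emph{not} a formal consequence of entropicity: it requires the relations (1) and (2) imposed on the sequence $\{a_n\}$ in Definition~\ref{def-KBA}, i.e.\ it requires RII invariance. The standard repair (Kauffman's original argument) is to expand only one crossing of the triple: $P(D)=P(D_0)*P(D_\infty)$, where $D_0$ on the two sides of the move differ by a planar isotopy (the third strand slides under two parallel strands), and $D_\infty$ on the two sides each reduce to a common diagram by two applications of RII, whose invariance you have already established. With that substitution --- prove RII first, then derive RIII from it --- your proof goes through; as written, the RIII step does not.
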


\begin{example}\label{ExaKB}
Let $\mathbb{Z}[p,r]$ be a polynomial ring equipped with a binary operation $*$ defined by 
$$a*b= pa+(-1-p)b+r.$$
For a fixed element $c \in \mathbb{Z}[p,r]$, take a sequence $a_{n} = c$ for each $n\in \mathbb{N}$.
Then the triple $(\mathbb{Z}[p,r]; *, \{a_{n}\} )$ is a Kauffman bracket magma.
\end{example}

The Kauffman bracket polynomial is obtained from one of examples of Kauffman bracket magmas as follows.

\begin{example}\label{ExaKauffmanBracketInv}
Let $\mathbb{Z}[A^{\pm 1}]$ be a polynomial ring equipped with a binary operation $*$ defined by 
$$a*b= Aa+A^{-1}b.$$
Consider the sequence $a_{n}=(-A^{2}-A^{-2})^{n-1}$ for each $n\in \mathbb{N}$.
Then the triple $(\mathbb{Z}[A^{\pm 1}]; *, \{a_{n}\} )$ is a Kauffman bracket magma.
Hence one can obtain the invariant $P$ of unoriented framed links and it is exactly equal to the Kauffman bracket polynomial. 
\end{example}

For instance, the calculation of the invariant $P$ of the trefoil $3_1$ is 
$P(3_{1})=[(P(K_{1})*P(K_{2}))*(P(K_{3})*P(K_{4}))]*[(P(K_{5})*P(K_{6}))*(P(K_{7})*P(K_{8}))]$
$=[(a_{2}*a_{1})*(a_{1}*a_{2})]*[(a_{1}*a_{2})*(a_{2}*a_{3})]$
and its resolving tree is depicted in Fig. \ref{pTrefoilKauffmanInv}.
  \begin{figure}[h!]
    \centering
    \includegraphics[width = 12cm]{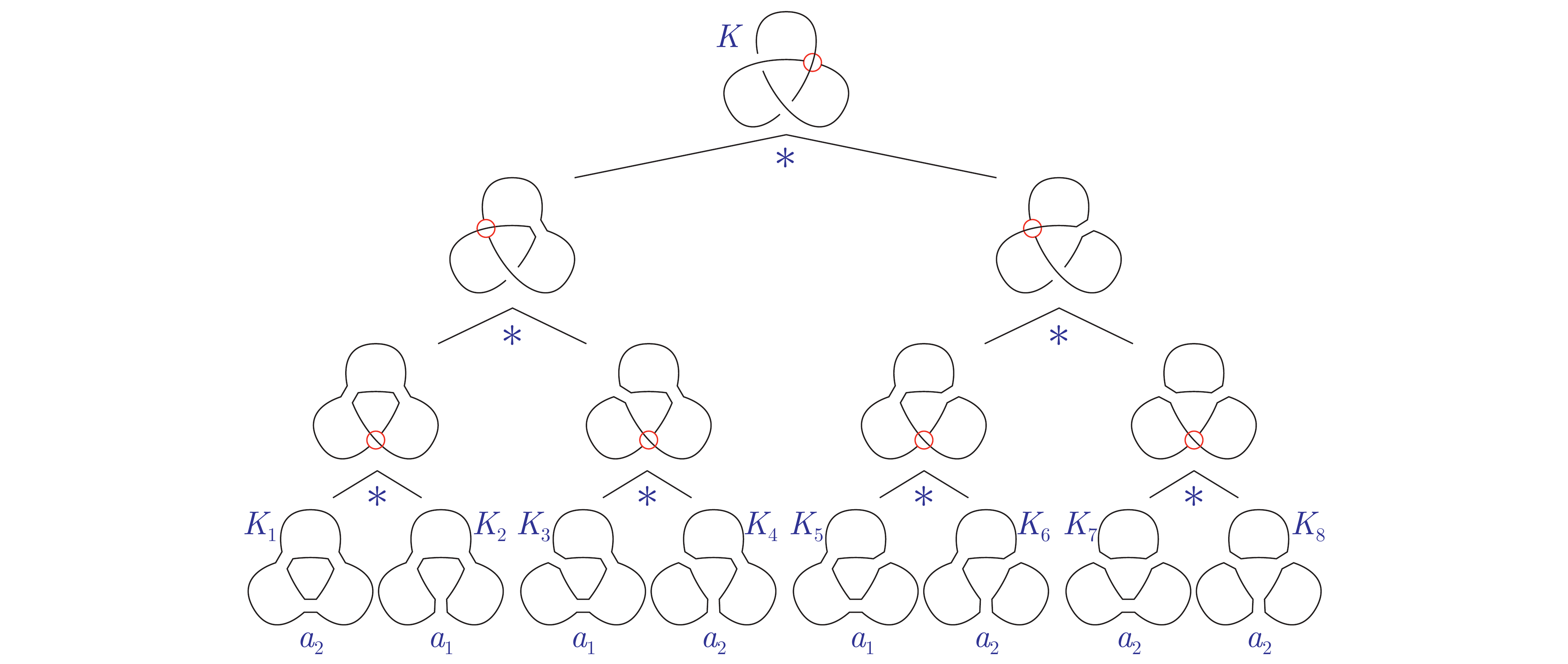}
    \caption{The resolving tree of $3_{1}$} \label{pTrefoilKauffmanInv}
  \end{figure}
In the case of the Kauffman bracket magma in Example \ref{ExaKB}, 
$P(3_{1})=[(c*c)*(c*c)]*[(c*c)*(c*c)]=-c+r$. 
By using the Kauffman bracket magma in Example \ref{ExaKauffmanBracketInv}, 
$P(3_{1})=[((-A^{2}-A^{-2})*1)*(1*(-A^{2}-A^{-2}))]*[(1*(-A^{2}-A^{-2}))*((-A^{2}-A^{-2})*(-A^{2}-A^{-2})^{2})]$
$=A^{-7}-A^{-3}-A^{5}$ $=\langle 3_{1} \rangle$.

\begin{remark}
For a polynomial ring $\mathbb{Z}[p,q,r]$, 
define a binary operation $*$ by $$a*b = pa+qb+r.$$ 
Then $(\mathbb{Z}[p,q,r], *)$ is entropic 
and it is easy to check that the Kauffman bracket magma in Example \ref{ExaKB} 
is obtained from the entropic magma $(\mathbb{Z}[p,q,r], *)$ by substituting $q= 1-p$.
Moreover, if we replace $p=A, q=A^{-1}$ and $r=0$ and take $a_{n}=(-A^{2}-A^{-2})^{n-1}$, 
then we obtain the Kauffman bracket magma in Example \ref{ExaKauffmanBracketInv}. 
If we take a non-zero sequence $\{a_{n}\}_{n \in \mathbb{N}} \subset \mathbb{Z}[p,q,r]$ for the triple $(\mathbb{Z}[p,q,r], *,\{a_{n}\})$ to be a Kauffman bracket magma, 
then it must be one of Kauffman magmas in Example \ref{ExaKB} or \ref{ExaKauffmanBracketInv}.
\end{remark}

For a Kauffman bracket magma $(A, *, \{a_{n}\})$, 
the invariant $P: {\mathcal L^{fr}} \rightarrow A$ is a regular isotopy invariant for links 
and then we can define new invariant $P^{\rho}$ to be invariant under Reidemeister move I 
by using $P$ with a specific map $\rho$.

Let ${\mathcal L}$ be the set of equivalence classes of links under all Reidemeister moves
and let $Map(A)$ be the set of maps from $A$ to itself.

\begin{theorem}\label{Thm0}
Assume that there is a map $\rho$ from ${\mathcal L^{fr}}$ to $Map(A)$ defined by 
for any link diagram $D$,
$$\rho(D)=\rho_{D}$$ such that 
$$\rho_{D}(P(D))=\rho_{D'}(P(D'))$$
where $D'$ is obtained from $D$ by applying once a positive or negative Reidemeister move I as depicted in Fig. \ref{RM1map}.

Then there exists unique invariant $P^{\rho} : {\mathcal L} \rightarrow A$ for links,
defined by $P^{\rho}(D)=\rho_{D}(P(D))$ for every link diagram $D$.
\begin{figure}[h!]
 \centering
 \includegraphics[width = 12cm]{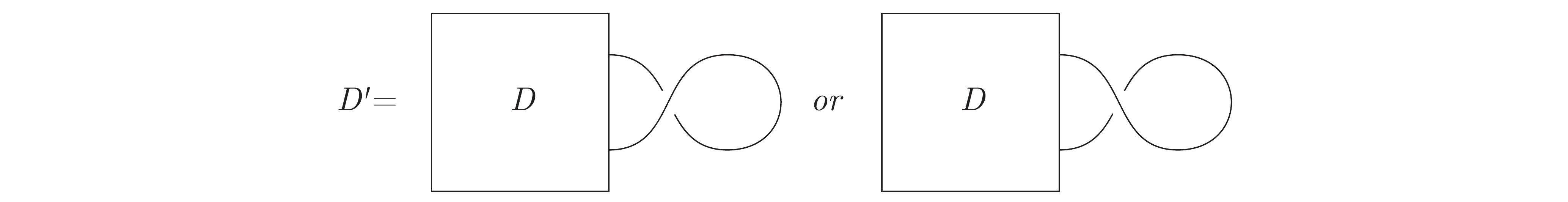}
 \caption{A positive or negative Reidemeister moves I}\label{RM1map}
\end{figure}
\end{theorem}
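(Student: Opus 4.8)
The plan is to verify that the assignment $D \mapsto \rho_D(P(D))$ on link diagrams is constant on equivalence classes under all Reidemeister moves, so that it descends to a well-defined function $P^{\rho}$ on $\mathcal{L}$; uniqueness will then be immediate from the defining formula. Since any two diagrams of the same link are connected by a finite sequence of Reidemeister moves of types I, II and III, it suffices to check invariance under a single move of each type. Accordingly I would split the argument into the regular-isotopy moves (II and III) and move I.

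First I would treat Reidemeister moves II and III. The key observation is that the domain of $\rho$ is $\mathcal{L}^{fr}$, the set of framed links, so $\rho_D$ depends on $D$ only through its class in $\mathcal{L}^{fr}$; in particular, if $D$ and $D'$ differ by a single move II or III, then they represent the same framed link and hence $\rho_D = \rho_{D'}$ as elements of $Map(A)$. Combining this with the fact that $P$ is a regular isotopy invariant, so that $P(D) = P(D')$, I obtain $P^{\rho}(D) = \rho_D(P(D)) = \rho_{D'}(P(D')) = P^{\rho}(D')$. Thus no separate hypothesis is needed for these two moves.

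Next I would treat Reidemeister move I, which is exactly the content of the standing assumption: if $D'$ is obtained from $D$ by one positive or negative move I as in Fig.~\ref{RM1map}, then $\rho_D(P(D)) = \rho_{D'}(P(D'))$, that is, $P^{\rho}(D) = P^{\rho}(D')$. Since the hypothesis covers both signs and the asserted equality is symmetric in $D$ and $D'$, this handles the move in both directions.

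Having established invariance under all three types of moves, the map $D \mapsto \rho_D(P(D))$ factors through $\mathcal{L}$, yielding the desired invariant $P^{\rho} : \mathcal{L} \to A$ with $P^{\rho}(D) = \rho_D(P(D))$ for every diagram $D$. Uniqueness is then forced, since a link invariant is determined by its values on diagrams and the defining formula prescribes exactly those values. I do not expect a serious obstacle; the only point requiring care is recognizing that the choice of domain $\mathcal{L}^{fr}$ for $\rho$ already encodes invariance of $\rho_D$ under moves II and III, so that the single assumption concerning move I suffices to upgrade the regular isotopy invariant $P$ to the ambient isotopy invariant $P^{\rho}$.
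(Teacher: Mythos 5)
Your proposal is correct and follows essentially the same route as the paper: moves II and III are handled by the fact that both $\rho$ (having domain $\mathcal{L}^{fr}$) and $P$ are regular isotopy invariants, and move I is exactly the standing hypothesis, with uniqueness immediate from the defining formula. The paper's proof is just a terser version of the same argument.
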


\begin{proof}
Construct a function $P^{\rho} : {\mathcal L} \rightarrow A$ 
defined by $P^{\rho}(D)=\rho_{D}(P(D))$ for every link diagram $D$
where
$${\mathcal L} \rightarrow Map(A)\times A \rightarrow A$$ defined by
$$D \mapsto (\rho_{D}, P(D)) \mapsto \rho_{D}(P(D)).$$
By hypothesis, $P^{\rho}$ is invariant under Reidemeister move I.
Since $\rho : {\mathcal L^{fr}} \rightarrow Map(A)$ and $P$ are regular isotopy invariants, 
$P^{\rho}$ is invariant under Reidemeister moves.
\end{proof}

\begin{example}
For the Kauffman bracket magma $(\mathbb{Z}[p,r]; *, \{a_{n}\})$ in Example \ref{ExaKB}, 
define a map $\rho : {\mathcal L^{fr}} \rightarrow Map(\mathbb{Z}[p,r])$ by for any link diagram $D$,
$\rho(D)=\rho_{D}$
where for any element $x\in \mathbb{Z}[p,r]$ 
\begin{equation*}
\rho_{D}(x) = \left\{
\begin{array}{cc} 
    x , & \text{if } c(D) \text{ is even}, \\
    -x+r ,& \text{if } c(D) \text{ is odd}, 
\end{array}\right.
\end{equation*}
and $c(D)$ is the number of crossings of $D$.
Since $\rho$ satisfies the condition of Theorem \ref{Thm0},
$P^{\rho} : \mathcal{L} \rightarrow \mathbb{Z}[p,r]$ is invariant for links.
Unfortunately, $P^{\rho}$ has only two values $1$ and $c$.  
\end{example}

\begin{example}
For the Kauffman bracket magma $(\mathbb{Z}[A^{\pm 1}]; *, \{a_{n}\})$ in Example \ref{ExaKauffmanBracketInv}, 
define a map $\rho : {\mathcal L^{fr}} \rightarrow Map(\mathbb{Z}[A^{\pm 1}])$ by for any oriented link diagram $D$,
$\rho(D)=\rho_{D}$
where 
$$\rho_{D}(x)=(-A^{3})^{-w(D)}x$$ for any element $x\in \mathbb{Z}[A^{\pm 1}]$ 
and $w(D)$ is the writhe of $D$.
Since $\rho$ satisfies the condition of Theorem \ref{Thm0},
$P^{\rho} : \mathcal{L} \rightarrow \mathbb{Z}[A^{\pm 1}]$ is invariant for oriented links.
In fact, the invariant $P^{\rho}(D)$ is equal to the normalized bracket polynomial $X(D)$.
\end{example}



\section{Marked Kauffman bracket magma and its invariant}
\subsection{Marked Kauffman bracket magmas}

In order to construct invariants for surface-links in $4$-space, 
we can generalize a Kauffman bracket magma 
and construct invariants for marked graph diagrams under unoriented Yoshikawa moves in Fig. \ref{UnoriYoshikawaMoves}

\begin{definition}\label{DefMKBM}
Let $(A;*,\{a_{n}\})$ be a Kauffman bracket magma and $\bullet$ a binary operation on $A$.
If $(A; \bullet)$ is entropic,
then the quadruple $(A;*,\bullet,\{a_{n}\})$ is called a {\it marked Kauffman bracket magma}.
\end{definition}

The two binary operation $*$ and $\bullet$ are corresponding to the Kauffman bracket skein relation and the marked skein relation, respectively.

\begin{figure}[h!]
 \centering
 \includegraphics[width = 12cm]{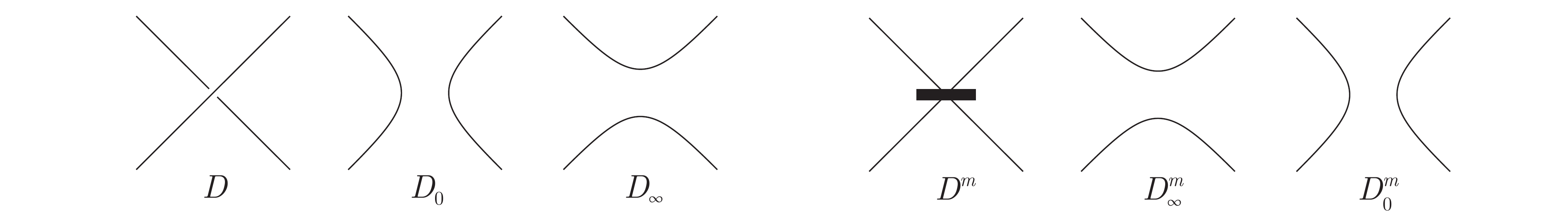}
 \caption{The Kauffman skein triple and the marked skein triple}\label{SkeinMarkedSkein}
\end{figure}

Now we construct the invariant $P_{M}$ from the set $\mathcal{F}$ of admissible marked graph diagrams to a marked Kauffman bracket magma $(A;*,\bullet,\{a_{n}\})$.

If the diagram $D$ has markers, then for each marker $m$, we splice $m$ satisfying
\begin{equation}\label{MarkedSkeinRel}
P_{M}(D^{m}) = P_{M}(D^{m}_{\infty}) \bullet P_{M}(D^{m}_{0}).
\end{equation}
where $(D^{m}, D^{m}_{\infty}, D^{m}_{0})$ is the marked skein triple depicted in Fig. \ref{SkeinMarkedSkein}.

If the diagram $D$ has no markers, then 
\begin{equation}
P_{M}(D) = P(D),
\end{equation}
where $P$ is the invariant valued in the Kauffman bracket magma $(A; *, \{a_{n}\})$.

\begin{lemma}\label{LemWD}
The function $P_{M} : \mathcal{F} \rightarrow A$ is well-defined.
\end{lemma}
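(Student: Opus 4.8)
The plan is to notice that the rule defining $P_M$ leaves open exactly one choice, namely the order in which the markers of $D$ are resolved by the marked skein relation \eqref{MarkedSkeinRel}. Since splicing all markers produces a classical diagram on which $P_M$ agrees with the already well-defined invariant $P$, the whole content of the lemma is that the element of $A$ assembled by \eqref{MarkedSkeinRel} is independent of the order of resolution. First I would record the elementary point that each marker is spliced by a purely local modification, so that resolving all $k$ markers of $D$ according to a fixed pattern $\epsilon=(\epsilon_{1},\dots,\epsilon_{k})$ with $\epsilon_{i}\in\{\infty,0\}$ yields one and the same classical diagram $D_{\epsilon}$ irrespective of the order of splicing; only the manner in which the $2^{k}$ values $P(D_{\epsilon})$ are combined under $\bullet$ can a priori depend on that order.

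I would then establish order-independence by induction on the number $k$ of markers of $D$. For $k=0$ the diagram is classical and $P_M(D)=P(D)$ is well defined by hypothesis, there being no choice at all. For the inductive step, assume the statement for every admissible marked graph diagram with fewer than $k$ markers, and let $D$ have $k\ge 1$ markers. For a marker $m$ put $P_M^{m}(D):=P_M(D^{m}_{\infty})\bullet P_M(D^{m}_{0})$; the diagrams $D^{m}_{\infty}$ and $D^{m}_{0}$ carry $k-1$ markers, so by the inductive hypothesis $P_M(D^{m}_{\infty})$ and $P_M(D^{m}_{0})$ are well defined, hence so is $P_M^{m}(D)$. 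Because resolving $m$ first and then running the (well-defined) recursion on the two resulting diagrams realises every resolution order that begins with $m$, the value returned by any such order equals $P_M^{m}(D)$. It therefore suffices to prove that $P_M^{m}(D)=P_M^{m'}(D)$ for any two distinct markers $m,m'$.

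This last equality is the crux, and it is exactly where the entropic hypothesis on $\bullet$ is used. Resolving $m'$ next inside $D^{m}_{\infty}$ and $D^{m}_{0}$ (legitimate by the inductive hypothesis) and writing $a=P_M(D_{\infty\infty})$, $b=P_M(D_{\infty 0})$, $c=P_M(D_{0\infty})$, $d=P_M(D_{00})$ for the four values obtained by splicing the ordered pair $(m,m')$, a direct expansion via \eqref{MarkedSkeinRel} gives
\[
P_M^{m}(D)=(a\bullet b)\bullet(c\bullet d),\qquad P_M^{m'}(D)=(a\bullet c)\bullet(b\bullet d).
\]
These two expressions coincide precisely because $(A;\bullet)$ is entropic (Definition \ref{def-Entropic}), that is, $(a\bullet b)\bullet(c\bullet d)=(a\bullet c)\bullet(b\bullet d)$; this closes the induction. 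I expect the only real obstacle to be bookkeeping: one must fix the index convention for the four splicings so that the two computations genuinely differ only by interchanging $b$ and $c$. Once the local nature of the splicing and the placement of the two factors in \eqref{MarkedSkeinRel} are pinned down, entropy finishes the argument, and neither associativity nor the Kauffman bracket conditions on $*$ are required.
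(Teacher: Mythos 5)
Your proposal is correct and follows essentially the same route as the paper: induction on the number of markers, reduction to comparing two resolution orders that differ by which of two markers is spliced first, and the entropic identity $(a\bullet b)\bullet(c\bullet d)=(a\bullet c)\bullet(b\bullet d)$ applied to the four doubly-spliced diagrams. Your formulation of the inductive step (any order beginning with $m$ returns $P_M^{m}(D)$, so it suffices to show $P_M^{m}(D)=P_M^{m'}(D)$) is a slightly cleaner packaging of the paper's two-stage argument, but the substance is identical.
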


\begin{proof}
First we will show that $P_{M}$ is well-defined for marked graph diagrams with ordered markers. 
Let $D$ be a marked graph diagram with $n$ ordered markers $\{m_{1}, \dots, m_{n}\}$. 
If $n=0$, then $P_{M}(D)=P(D)$ and it is well-defined. 
Assume that $P_{M}$ is well-defined for marked graph diagrams with $k$ ordered markers 
for some $k \in \mathbb{N}\cup \{ 0 \}$. 
Let $D$ be a marked graph diagrams with $(k+1)$ ordered markers $\{m_{1}, \dots, m_{k+1}\}$. 
By Equation \ref{MarkedSkeinRel} at the marker $m_{k+1}$,
$P_{M}(D^{m_{k+1}}) = P_{M}(D_{\infty}^{m_{k+1}}) \bullet P_{M}(D_{0}^{m_{k+1}})$. 
Since $D_{\infty}^{m_{k+1}}$ and $D_{0}^{m_{k+1}}$ have $k$ ordered markers, 
by the hypothesis,
the values $P_{M}(D_{\infty}^{m_{k+1}})$ and $P_{M}(D_{0}^{m_{k+1}})$ are well-defined 
and hence $P_{M}(D^{m_{k+1}})$ is well-defined by Mathematical Induction.

Second we will prove that $P_{M}$ does not depend on the order of markers. 
Let $m_{1}$ and $m_{2}$ be two markers of a marked graph diagram $D$. 
It is sufficient to show that two values corresponding to the ordered markers $\{m_{1}, m_{2}\}$ and the ordered markers $\{m_{2}, m_{1}\}$ are the same.
The marked graph diagram is obtained from $D$ by splicing $m_{1}$ in a state $\alpha$ and $m_{2}$ in a state $\beta$ for $\alpha, \beta \in \{0, \infty \}$ and denoted by $D^{m_{1},m_{2}}_{\alpha,\beta}$.
In the ordered markers $\{m_{1}, m_{2}\}$,
\begin{eqnarray*}
P_{M}(D_{*,*}^{m_{1},m_{2}}) &=& P_{M}(D^{m_{1},m_{2}}_{\infty, *}) \bullet P_{M}(D^{m_{1},m_{2}}_{0, *})\\
&=&[P_{M}(D^{m_{1},m_{2}}_{\infty, \infty}) \bullet P_{M}(D^{m_{1},m_{2}}_{\infty, 0})] \bullet [ P_{M}(D^{m_{1},m_{2}}_{0, \infty}) \bullet P_{M}(D^{m_{1},m_{2}}_{0, 0})],
\end{eqnarray*}
and in the ordered markers $\{m_{2}, m_{1}\}$,
\begin{eqnarray*}
P_{M}(D_{*,*}^{m_{1},m_{2}}) &=& P_{M}(D^{m_{1},m_{2}}_{*,\infty})\bullet P_{M}(D^{m_{1},m_{2}}_{*,0})\\
&=&[P_{M}(D^{m_{1},m_{2}}_{\infty, \infty}) \bullet P_{M}(D^{m_{1},m_{2}}_{0, \infty})]\bullet [ P_{M}(D^{m_{1},m_{2}}_{\infty,0}) \bullet P_{M}(D^{m_{1},m_{2}}_{0, 0})],
\end{eqnarray*}
Since $(A,\bullet)$ is an entropic, one can see that 
\begin{eqnarray*}
&[P_{M}(D^{m_{1},m_{2}}_{\infty, \infty}) \bullet P_{M}(D^{m_{1},m_{2}}_{\infty, 0})]\bullet [P_{M}(D^{m_{1},m_{2}}_{0, \infty}) \bullet P_{M}(D^{m_{1},m_{2}}_{0, 0})]
\\ =& 
[P_{M}(D^{m_{1},m_{2}}_{\infty, \infty}) \bullet P_{M}(D^{m_{1},m_{2}}_{0, \infty})]\bullet [ P_{M}(D^{m_{1},m_{2}}_{\infty,0}) \bullet P_{M}(D^{m_{1},m_{2}}_{0, 0})].
\end{eqnarray*}
Therefore $P_{M}(D^{m_{1},m_{2}})$ does not depend on the order of markers and the well-definedness of $P_{M}$ is proved.
\end{proof}

\begin{lemma}\label{LemGamma2345}
For a marked graph diagram $D$, $P_{M}$ is invariant under $\Gamma_{2}$, $\Gamma_{3}$, $\Gamma_{4}$, $\Gamma'_{4}$ and $\Gamma_{5}$.
\end{lemma}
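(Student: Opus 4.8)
The plan is to reduce the invariance of $P_{M}$ to the fact, recorded in the Proposition of Niebrzydowski--Przytycki, that the Kauffman bracket magma invariant $P$ is a regular isotopy invariant, i.e. invariant under Reidemeister moves II and III. The engine is the marked skein relation (Equation \ref{MarkedSkeinRel}) together with the order-independence established in Lemma \ref{LemWD}: resolving every marked vertex of a diagram $D$ into its $0$- and $\infty$-splicings expresses $P_{M}(D)$ as a fixed $\bullet$-combination (a binary tree whose internal nodes are $\bullet$) of the values $P(D_{s})$, where $s$ ranges over all states assigning $0$ or $\infty$ to each marker and $D_{s}$ is the resulting marker-free classical diagram. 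Since each of the moves $\Gamma_{2}, \Gamma_{3}, \Gamma_{4}, \Gamma_{4}', \Gamma_{5}$ alters $D$ only inside a small disk, the markers lying outside this disk are in natural bijection with those of $D'$, and the number of markers inside the disk is the same for $D$ and $D'$. Fixing a state on the outside markers therefore yields identical tree shapes for $P_{M}(D)$ and $P_{M}(D')$, so it suffices to match the two expressions leaf by leaf.

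First I would treat $\Gamma_{2}$ and $\Gamma_{3}$, whose local disks contain no marked vertex. Here, for each state $s$ of the outside markers, the classical diagrams $D_{s}$ and $D'_{s}$ differ precisely by a Reidemeister II move (for $\Gamma_{2}$) or a Reidemeister III move (for $\Gamma_{3}$), so $P(D_{s}) = P(D'_{s})$; the two $\bullet$-trees then agree leaf by leaf and $P_{M}(D) = P_{M}(D')$ follows.

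Next I would treat $\Gamma_{4}, \Gamma_{4}', \Gamma_{5}$, each of whose local disks contains a single marked vertex. For a fixed outside state $s$ I would apply the marked skein relation at this vertex, writing $P_{M}(D_{s}) = P_{M}((D_{s})_{\infty}) \bullet P_{M}((D_{s})_{0})$ and likewise for $D'$. The heart of the argument is then a case check: for each splicing $\epsilon \in \{0, \infty\}$ of the participating vertex, the fully resolved classical diagrams $(D_{s})_{\epsilon}$ and $(D'_{s})_{\epsilon}$ must be related by a finite sequence of planar isotopies and Reidemeister II/III moves. Granting this, $P((D_{s})_{\epsilon}) = P((D'_{s})_{\epsilon})$, the two $\bullet$-products coincide, and invariance follows after reassembling the outside tree.

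The main obstacle is exactly this last verification. One must read off from the pictures of $\Gamma_{4}, \Gamma_{4}', \Gamma_{5}$ in Fig. \ref{UnoriYoshikawaMoves} both resolutions of the marked vertex involved, and confirm that in every resolution the move degenerates to classical regular-isotopy moves --- typically a pair of Reidemeister II moves or a single Reidemeister III move, with the marker direction determining which splicing produces which configuration. This is a routine but attention-demanding diagram chase, essentially the marked-magma analogue of S.Y. Lee's proposition quoted earlier. I would emphasize that, beyond the well-definedness already granted by Lemma \ref{LemWD}, the invariance argument invokes only that $P$ is a regular isotopy invariant together with the marked skein relation, and requires no further appeal to the entropic law for $\bullet$, since in each of these moves exactly one marked vertex participates and no reordering of $\bullet$-products is needed.
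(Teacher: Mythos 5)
Your reduction via Lemma \ref{LemWD} to a $\bullet$-tree of classical states, and your handling of $\Gamma_{2}$, $\Gamma_{3}$, $\Gamma_{4}$ and $\Gamma'_{4}$, agree in substance with the paper: for $\Gamma_{4}$ one splicing of the participating marked vertex yields identical classical diagrams and the other yields diagrams differing by a Reidemeister II move, so the regular isotopy invariance of $P$ suffices. The gap is in your blanket claim for $\Gamma_{5}$ that for \emph{each} splicing $\epsilon\in\{0,\infty\}$ the resolved classical diagrams $(D_{s})_{\epsilon}$ and $(D'_{s})_{\epsilon}$ are related by planar isotopy and Reidemeister II/III moves. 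The $\Gamma_{5}$ tangle contains, besides the marked vertex, a classical crossing $c$ between two of its edges, lying on one side of the vertex in $D$ and on the other side in $D'$. For one splicing the two diagrams do coincide, but for the splicing that replaces the vertex by a cap and a cup the crossing becomes a Reidemeister I kink sitting on the cap in one diagram and on the cup in the other. These arcs close up differently in the ambient diagram (in general they even belong to different components), so the two states differ by Reidemeister I moves and are \emph{not} regularly isotopic; since $P$ is only a regular isotopy invariant, you cannot conclude $P\bigl((D_{s})_{0}\bigr)=P\bigl((D'_{s})_{0}\bigr)$ at this point.

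The paper closes exactly this case by one further application of the Kauffman skein relation at $c$: it writes $P_{M}(D^{m_{1}}_{0})=P_{M}(D^{m_{1},c_{1}}_{0,0})*P_{M}(D^{m_{1},c_{1}}_{0,\infty})$ and checks that the resulting crossingless local pieces coincide \emph{as diagrams} with the corresponding pieces of $D'$, in matching order --- a point that matters because $*$ need not be commutative. So your outline is repairable, but as stated the $\Gamma_{5}$ step fails and needs this extra skein decomposition rather than an appeal to regular isotopy.
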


\begin{proof}
Let $D$ and $D'$ be two marked graph diagrams 
such that one of them are obtained from the other by only one of Yoshikawa moves $\Gamma_2, \Gamma_3$, $\Gamma_4, \Gamma'_4$ and $\Gamma_5$.
Since the value $P_{M}$ does not depend on the order of splicings of markers,
all markers of $D$ and $D'$ except for markers contained in the applied move are spliced.
It is sufficient to consider $D$ and $D'$ with no markers except for markers contained in the applied moves.

For the moves $\Gamma_2$ and $\Gamma_{3}$, 
since there are no markers in $\Gamma_{2}$ and $\Gamma_{3}$, that is,  
$$P_{M}(D) = P(D) = P(D') = P_{M}(D'),$$
it is obvious that $P_{M}$ is invariant under $\Gamma_{2}$ and $\Gamma_{3}$.

For the move $\Gamma_{4}$, 
let $D$ and $D'$ be two marked graph diagrams obtained from each other by $\Gamma_{4}$ 
and let $m_{1}$ and $m_{2}$ denote two marked vertices of $\Gamma_{4}$, 
as described in Fig. \ref{InvUnoriGamma4}. 
\begin{figure}[h!]
 \centering
 \includegraphics[width = 12cm]{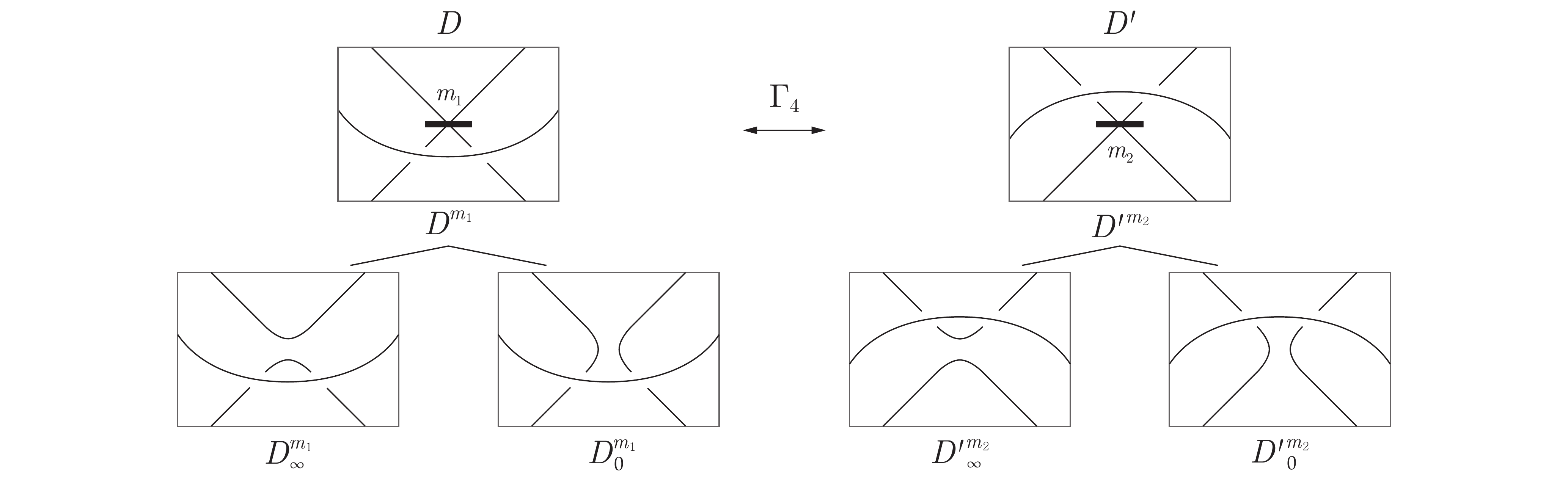}
 \caption{The resolving tree of $\Gamma_{4}$}\label{InvUnoriGamma4}
\end{figure}
Then
$$P_{M}(D)=P_{M}(D^{m_{1}})=P_{M}(D^{m_{1}}_{\infty}) \bullet P_{M}(D^{m_{1}}_{0})$$ 
and 
$$P_{M}(D')=P_{M}(D'^{m_{2}})=P_{M}(D'^{m_{2}}_{\infty}) \bullet P_{M}(D'^{m_{2}}_{0}).$$
Note that $D^{m_{1}}_{0}$ and $D'^{m_{2}}_{0}$ are the same diagrams 
and $D^{m_{1}}_{\infty}$ and $D'^{m_{2}}_{\infty}$ are equivalent under $\Gamma_{2}$. 
Since $P_{M}$ is invariant under $\Gamma_{2}$, 
$P_{M}(D^{m_{1}}_{\infty}) = P_{M}(D'^{m_{2}}_{\infty})$ and $P_{M}(D^{m_{1}}_{0})=P_{M}(D'^{m_{2}}_{0})$ 
and hence $P_{M}(D) = P_{M}(D')$. 
Analogously, $P_{M}$ is invariant under $\Gamma_{4}'$.

For the move $\Gamma_{5}$, 
let $D$ and $D'$ be two marked graph diagrams obtained from each other by $\Gamma_{5}$ 
and let $c_{1}, c_{2}$ and $m_{1}, m_{2}$ denote two classical crossings and two marked vertices of $\Gamma_{5}$, respectively, 
as described in Fig. \ref{InvUnoriGamma5}. 
\begin{figure}[h!]
 \centering
 \includegraphics[width = 12cm]{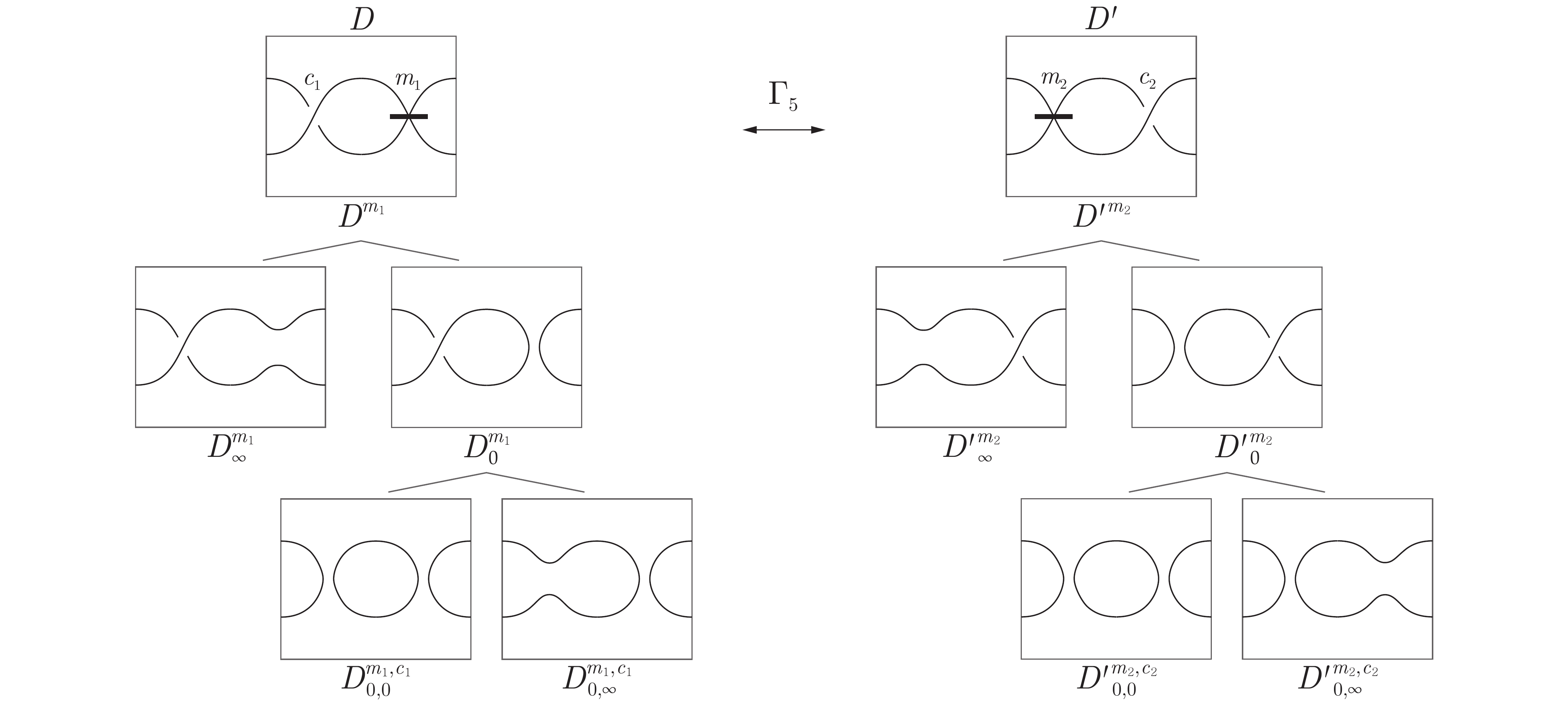}
 \caption{The resolving tree of $\Gamma_{5}$}\label{InvUnoriGamma5}
\end{figure}
Then their calculation is as follows :
$$P_{M}(D)=P_{M}(D^{m_{1}}_{\infty}) \bullet P_{M}(D^{m_{1}}_{0})=P_{M}(D^{m_{1}}_{\infty}) \bullet (P_{M}(D^{m_{1}, c_{1}}_{0, 0})*P_{M}(D^{m_{1}, c_{1}}_{0, \infty}))$$  
and 
$$P_{M}(D')=P_{M}(D'^{m_{2}}_{\infty}) \bullet P_{M}(D'^{m_{2}}_{0})=P_{M}(D'^{m_{2}}_{\infty}) \bullet (P_{M}(D'^{m_{2}, c_{2}}_{0, 0})*P_{M}(D'^{m_{2}, c_{2}}_{0, \infty})).$$ 
Note that the above second equalities are obtained by applying the Kauffman skein triple at classical crossings $c_{1}$ and $c_{2}$.
Since 
$D^{m_{1}}_{\infty}$ and $D'^{m_{2}}_{\infty}$ are the same diagrams
and since
$D^{m_{1}, c_{1}}_{0, 0}$ and $D^{m_{1}, c_{1}}_{0, \infty}$ are the same as 
$D'^{m_{2}, c_{2}}_{0, 0}$ and $D'^{m_{2}, c_{2}}_{0, \infty}$, respectively,
$P_{M}(D^{m_{1}}_{\infty})=P_{M}(D'^{m_{2}}_{\infty})$ and $P_{M}(D^{m_{1}}_{0})=P_{M}(D'^{m_{2}}_{0})$.
Hence $P_{M}(D)=P_{M}(D')$.
\end{proof}

\begin{remark}
In the proof of Lemma \ref{LemWD}, 
by using the entropic condition $(a\bullet b) \bullet (c\bullet d) = (a \bullet c) \bullet (b \bullet d)$, 
we can showy that $P_{M}$ is independent on the order of markers. 
In the same manner, the entropic property between two binary operations $*$ and $\bullet$, 
$$(a* b) \bullet (c * d) = (a \bullet c) * (b \bullet d),$$
means that the value $P_{M}$ does not depend on the order of splicings of a classical crossing and a marked vertex. 
\end{remark}

\begin{remark}
For the proof of Lemma \ref{LemGamma7and8}, 
consider the elementary $3$- and $4$-tangle diagrams,
where the {\it elementary $n$-tangle diagram} means that the $n$-tangle diagram has no crossings and no circle components.
Then it is easy to check that there exist 
$5$ types of the elementary $3$-tangle diagrams and $14$ types of the elementary $4$-tangle diagrams 
illustrated in Fig. \ref{Elem3tangle} and \ref{Elem4tangle}, respectively.
\begin{figure}[h!]
 \centering
 \includegraphics[width = 12cm]{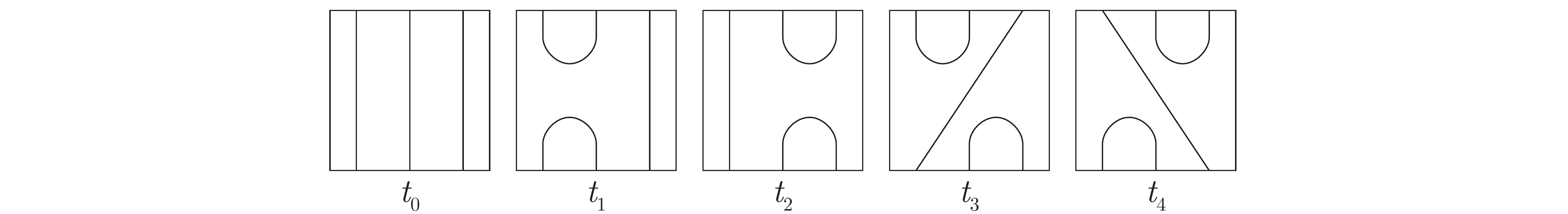}
 \caption{Elementary $3$-tangle diagrams}\label{Elem3tangle}
\end{figure}

\begin{figure}[h!]
 \centering
 \includegraphics[width = 12cm]{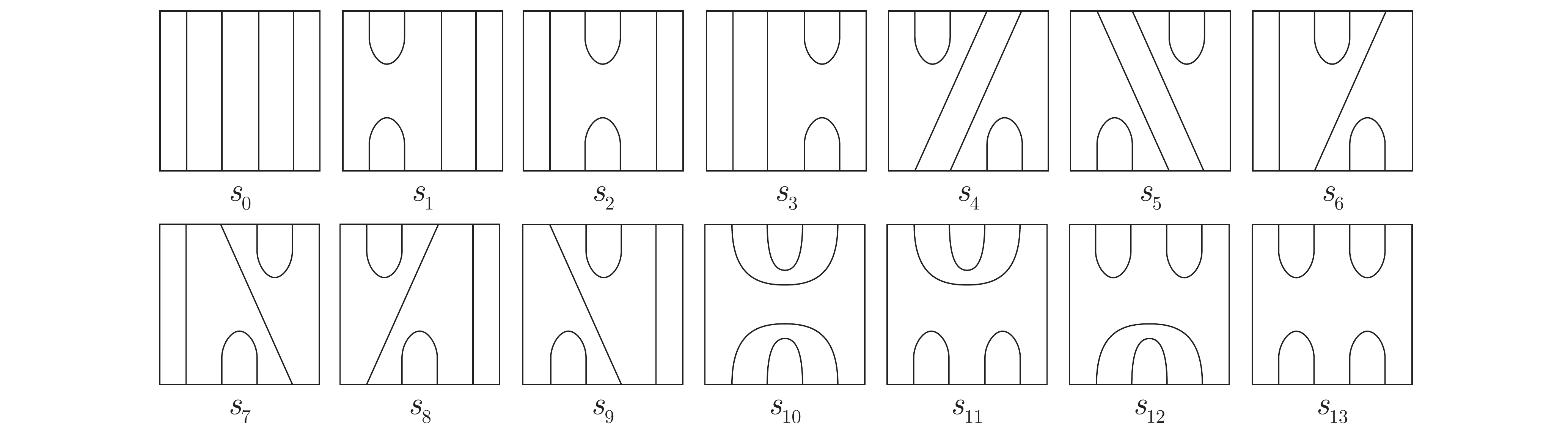}
 \caption{Elementary $4$-tangle diagrams}\label{Elem4tangle}
\end{figure}
\end{remark}

\begin{lemma}\label{LemGamma7and8}
Let $(A;*,\bullet,\{a_{n}\})$ be a marked Kauffman bracket magma satisfying $(a* b) \bullet (c * d) = (a \bullet c) * (b \bullet d)$, for all $a, b, c, d \in A$.
If $a_{n}= a_{n+2}$ for all $n\in \Neal$,
then $P_{M}$ is invariant under Yoshikawa moves $\Gamma_{7}$ and $\Gamma_{8}$.
\end{lemma}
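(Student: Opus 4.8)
The plan is to follow the template of Lemma \ref{LemGamma2345}. I would resolve every marker and every classical crossing occurring inside the region of the move, reducing each side of $\Gamma_7$ (resp. $\Gamma_8$) to an expression---built from the two operations $*$ and $\bullet$---in the values $P_M$ of trivial link diagrams, and then match the two expressions using the two hypotheses: the cross-entropic identity $(a*b)\bullet(c*d)=(a\bullet c)*(b\bullet d)$ and the periodicity $a_n=a_{n+2}$.

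First I would invoke Lemma \ref{LemWD} to splice away every marker of $D$ and $D'$ lying outside the move region; since $P_M$ is independent of the order of splicings, it then suffices to treat diagrams whose only markers and crossings are those displayed in the move. Next I would apply the marked skein relation (\ref{MarkedSkeinRel}) at each marked vertex and the Kauffman skein relation $P(D)=P(D_0)*P(D_\infty)$ at each crossing, expanding $P_M(D)$ and $P_M(D')$ into iterated $*$- and $\bullet$-expressions whose innermost entries are the values of the fully resolved diagrams. Each fully resolved diagram has neither crossings nor markers, so within the move region it coincides with one of the elementary $3$-tangle (resp. $4$-tangle) diagrams of Fig. \ref{Elem3tangle} (resp. Fig. \ref{Elem4tangle}); closed against the fixed exterior it becomes a trivial link $T_n$ and thus contributes the value $a_n$.

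The cross-entropic identity is exactly what lets me interchange a crossing-splicing with a marker-splicing, so I can bring the two expressions coming from $D$ and $D'$ into a common parenthesization, after which the comparison reduces to matching corresponding innermost entries. For each state I expect the elementary tangle on the $D$-side to close to a trivial link whose number of components either equals the count on the $D'$-side, giving literally the same $a_n$, or differs from it by an even number; since $a_n=a_{n+2}$ holds for every $n$, and hence $a_n=a_{n+2k}$ for all $k$, the two entries remain equal. This would give $P_M(D)=P_M(D')$.

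I expect the main obstacle to be the bookkeeping for $\Gamma_8$: I would have to verify, over all $14$ elementary $4$-tangle diagrams and all admissible closures, that the state-by-state correspondence between the two sides always pairs trivial links whose component counts differ by an even number, since an odd discrepancy could not be absorbed by $a_n=a_{n+2}$. The argument for $\Gamma_7$ is the same but ranges over only the five elementary $3$-tangle diagrams and should be considerably shorter; once $\Gamma_7$ is in hand, I would obtain $\Gamma_8$ by the identical mechanism carried out one tangle-size larger.
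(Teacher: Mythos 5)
Your proposal follows essentially the same route as the paper's proof: isolate the move tangle from a common exterior tangle, resolve all exterior crossings and the two marked vertices, use the identity $(a*b)\bullet(c*d)=(a\bullet c)*(b\bullet d)$ to put both sides into a common parenthesization, and reduce the comparison to component counts of trivial links absorbed by $a_n=a_{n+2}$. The ``bookkeeping'' you defer is precisely the content of the paper's explicit tables over the five elementary $3$-tangles and fourteen elementary $4$-tangles, where the only discrepancies are index shifts by $2$ (e.g.\ $a_1$ versus $a_3$ for $\Gamma_7$), confirming your expectation.
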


\begin{proof}
In order to prove that $P_{M} : \mathcal{F} \rightarrow A$ is an invariant under $\Gamma_{7}$,
let $D$ and $D'$ be two marked graph diagrams where $D'$ is obtained from $D$ by applying only once $\Gamma_{7}$.
Let $T_{7}$ and $T_{7}'$ denote the $3$-tangle diagrams corresponding to $\Gamma_{7}$ 
where $\{m_1, m_2\}$ and $\{m'_1, m'_2\}$ are the sets of markers of $D$ and $D'$ respectively, as depicted in Fig. \ref{Gamma7}.
\begin{figure}[h!]
 \centering
 \includegraphics[width = 12cm]{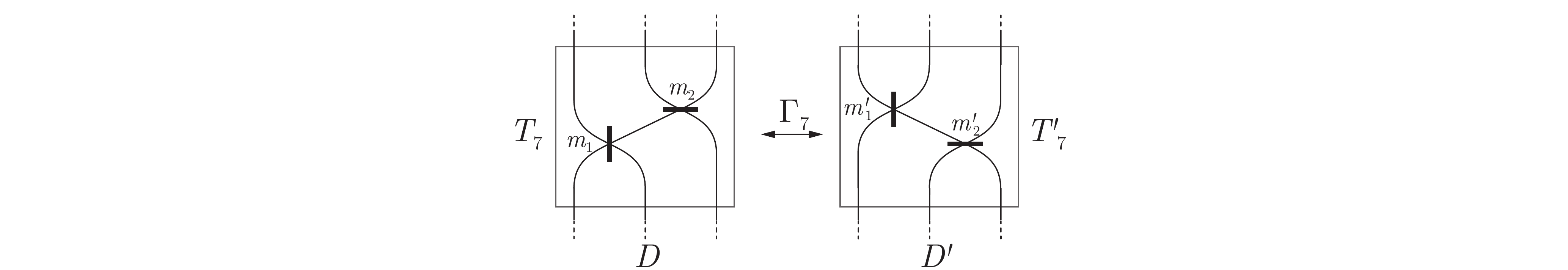}
 \caption{Yoshikawa move $\Gamma_7$}\label{Gamma7}
\end{figure}
If $D$ and $D'$ have other markers in outside of $T_{7}$ and $T'_{7}$, 
by splicing all of them,
then the result diagrams obtained from $D$ and $D'$ has no markers except $\{m_1, m_2\}$ and $\{m'_1, m'_2\}$, respectively.
One can assume that $D$ and $D'$ have no markers except $\{m_1, m_2\}$ and $\{m'_1, m'_2\}$ in $\Gamma_{7}$.

We can deform $D$ into the closure $cl(T_{7}S)$ of the sum of the $3$-tangle diagram $T_{7}$ and a $3$-tangle diagram $S$ as shown in Fig. \ref{Gamma7-2}, 
where $S$ is a $3$-tangle diagram whose all crossings are classical. 
\begin{figure}[h!]
 \centering
 \includegraphics[width = 12cm]{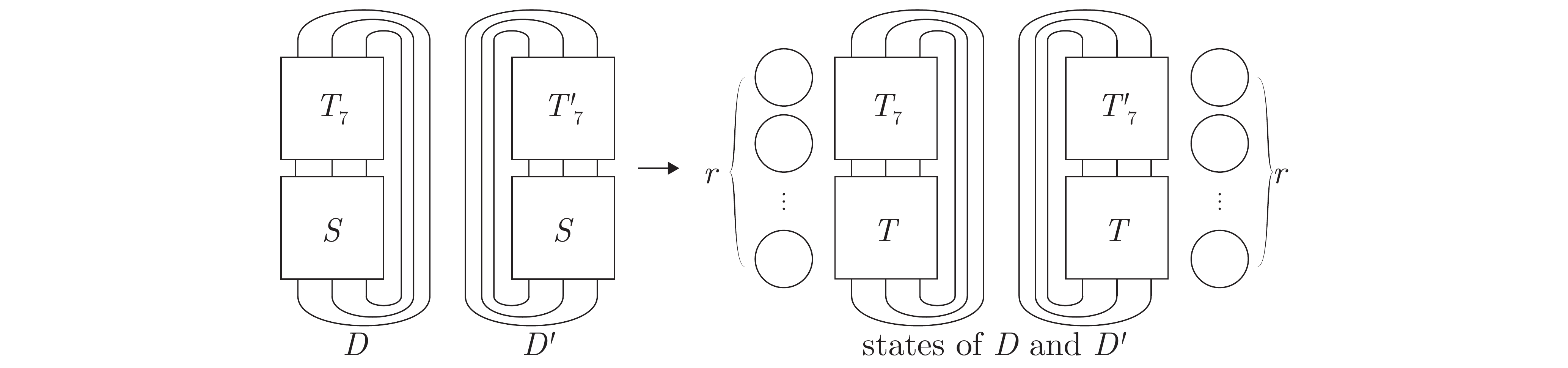}
 \caption{The states of $cl(T_{7}S)$ and $cl(T'_{7}S)$}\label{Gamma7-2}
\end{figure}
By the relation $(a* b) \bullet (c * d) = (a \bullet c) * (b \bullet d)$ and by applying the Kauffman skein triple at all crossings of $S$, 
the result states have only two marked vertices $m_1$ and $m_2$.
That is, as described in Fig. \ref{Gamma7-2},
each states can be expressed as the disjoint union $O_{r}\sqcup cl(T_{7}T)$ 
where $O_{r}$ is the trivial link diagram with $r$ components
and $T$ is one of the elementary $3$-tangle diagrams $t_0$, $t_1$, $\cdots$, $t_4$ in Fig. \ref{Elem3tangle}.
Then the value $P_{M}(D)=P_{M}(cl(T_{7}S))$ can be expressed as the value based on $P_{M}(O_{r}\sqcup cl(T_{7}T))$ by applying operations $*$ and $\bullet$.

In the same manner, 
the value $P_{M}(D')=P_{M}(cl(T'_{7}S))$ can be expressed as the value based on $P_{M}(O_{r}\sqcup cl(T'_{7}T))$ by applying operations $*$ and $\bullet$. 
Therefore, if $P_{M}(O_{r}\sqcup cl(T_{7}T))$ and $P_{M}(O_{r}\sqcup cl(T'_{7}T))$ are the same for all $T=t_0$, $t_1$, $\cdots$, $t_4$, then $P_{M}(D)=P_{M}(D')$.
By calculating the invariant $P_{M}(cl(T_{7}T))$ and $P_{M}(cl(T'_{7}T))$ valued in $A$ shown in Fig. \ref{Gamma7-1},
one can obtain two equations
$$P_{M}(cl(T_{7}T))= \left(P_{M}(cl(t_{2}T))\bullet P_{M}(cl(t_{0}T))\right)\bullet \left(P_{M}(cl(t_{4}T))\bullet P_{M}(cl(t_{1}T))\right)$$
and
$$P_{M}(cl(T'_{7}T))= \left(P_{M}(cl(t_{2}T))\bullet P_{M}(cl(t_{0}T))\right)\bullet \left(P_{M}(cl(t_{3}T))\bullet P_{M}(cl(t_{1}T))\right).$$
Then two equations are the same except the values $P_{M}(cl(t_{4}T))$ and $P_{M}(cl(t_{3}T))$.

\begin{figure}[h!]
 \centering
 \includegraphics[width = 12cm]{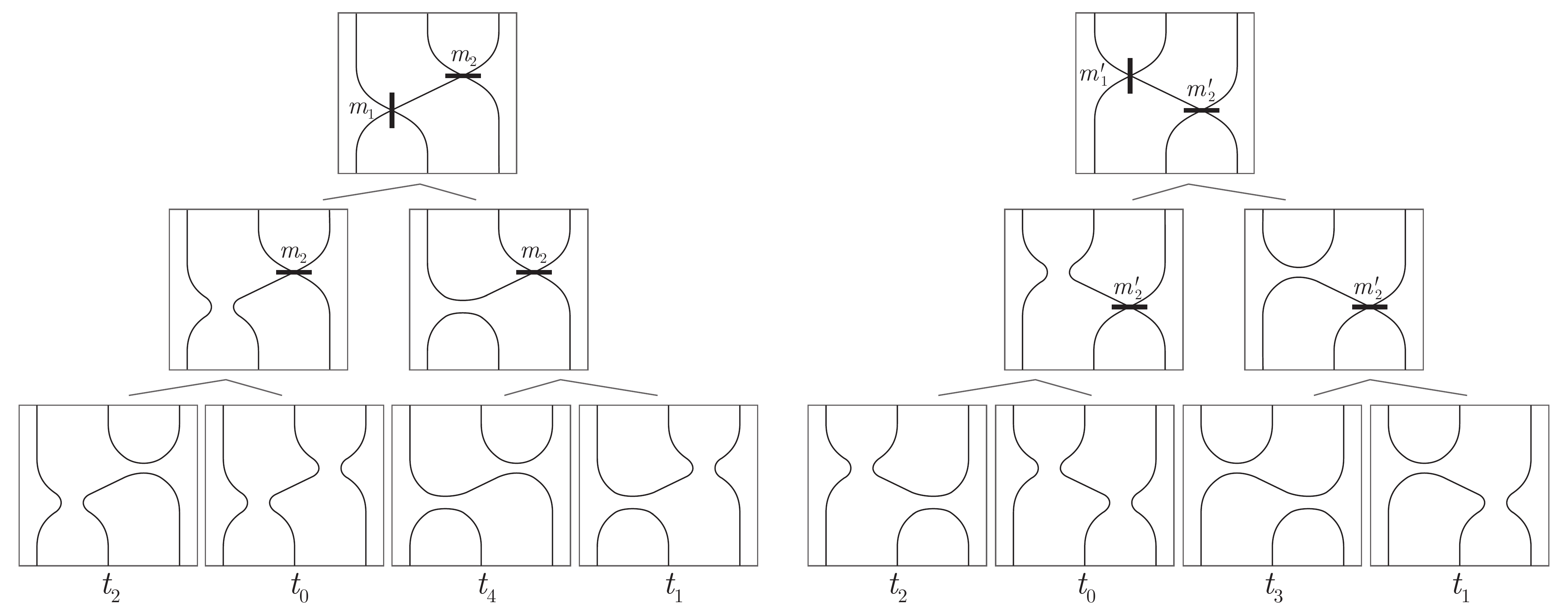}
 \caption{The resolving tree of $\Gamma_{7}$}\label{Gamma7-1}
\end{figure}

For each $T=t_0$, $t_1$, $\cdots$, $t_4$, 
$P_{M}(cl(t_{4}T))$ and $P_{M}(cl(t_{3}T))$ are as follows.
$$
\begin{array}{c|ccccc}
  T & t_0 & t_1 & t_2 & t_3 & t_4   \\
  \hline
  P_{M}(cl(t_{4}T)) & a_1 & a_2 & a_2 & a_3 & a_1  \\
  P_{M}(cl(t_{3}T)) & a_1 & a_2 & a_2 & a_1 & a_3 
\end{array}
$$
If $a_{1}= a_{3}$, then $P_{M}(cl(t_{4}T))=P_{M}(cl(t_{3}T))$.
By considering the trivial link diagram $O_{r}$,
since $a_{n}= a_{n+2}$ for all $n\in \Neal$, 
$P_{M}(O_{r}\sqcup cl(T_{7}T))$ and $P_{M}(O_{r}\sqcup cl(T'_{7}T))$.
Hence if $a_{n}= a_{n+2}$ for all $n\in \Neal$,
then $P_{M} : \mathcal{F} \rightarrow A$ is invariant under $\Gamma_{7}$.

Secondly, we will show that $P_{M} : \mathcal{F} \rightarrow A$ is an invariant under $\Gamma_{8}$.
Similarly to $\Gamma_{7}$,
let $D$ and $D'$ be two marked graph diagrams where $D'$ is obtained from $D$ by applying only once $\Gamma_{8}$.
Let $T_{8}$ and $T_{8}'$ denote the $4$-tangle diagrams corresponding to $\Gamma_{8}$ 
where $\{m_1, m_2\}$ and $\{m'_1, m'_2\}$ are the sets of markers of $D$ and $D'$ respectively, as depicted in Fig. \ref{Gamma8}.

\begin{figure}[h!]
 \centering
 \includegraphics[width = 12cm]{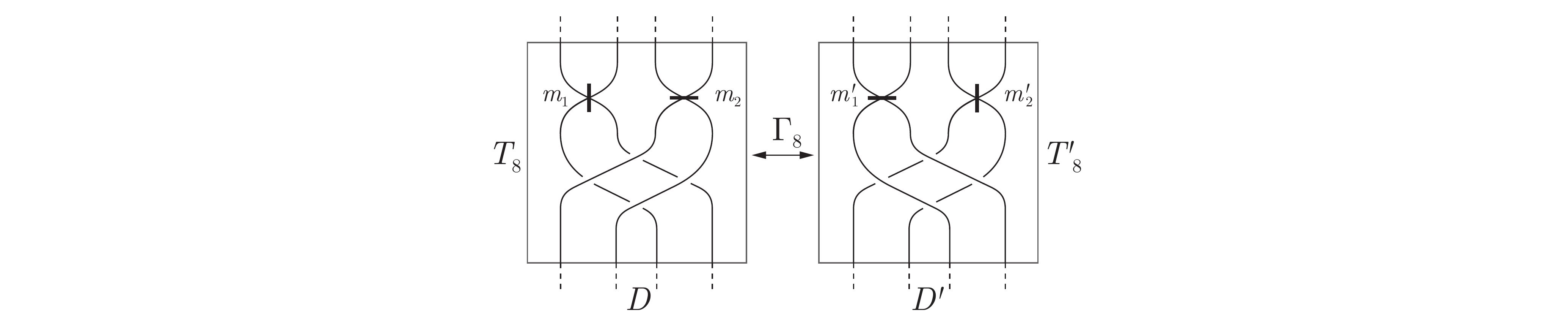}
 \caption{Yoshikawa move $\Gamma_8$}\label{Gamma8}
\end{figure}

As the proof for $\Gamma_{7}$,
one can assume that both $D$ and $D'$ have no markers except markers in $\Gamma_{8}$ 
and
$D$ can be deformed to the closure of the sum of the $4$-tangle diagram $T_{8}$ and a $4$-tangle diagram $S$ as shown in Fig. \ref{Gamma8-2}.
By applying the Kauffman skein triple at all crossings of $S$,
one can obtain the result diagrams as the disjoint union $O_{r}\sqcup cl(T_{8}T)$ of the trivial link diagram $O_{r}$ with $r$ components and the closures of $T_{8}T$, as shown in Fig. \ref{Gamma8-2}, where $T$ is one of the elementary $4$-tangle diagrams $s_0$, $s_1$, $\cdots$, $s_{13}$ in Fig. \ref{Elem4tangle}.
\begin{figure}[h!]
 \centering
 \includegraphics[width = 12cm]{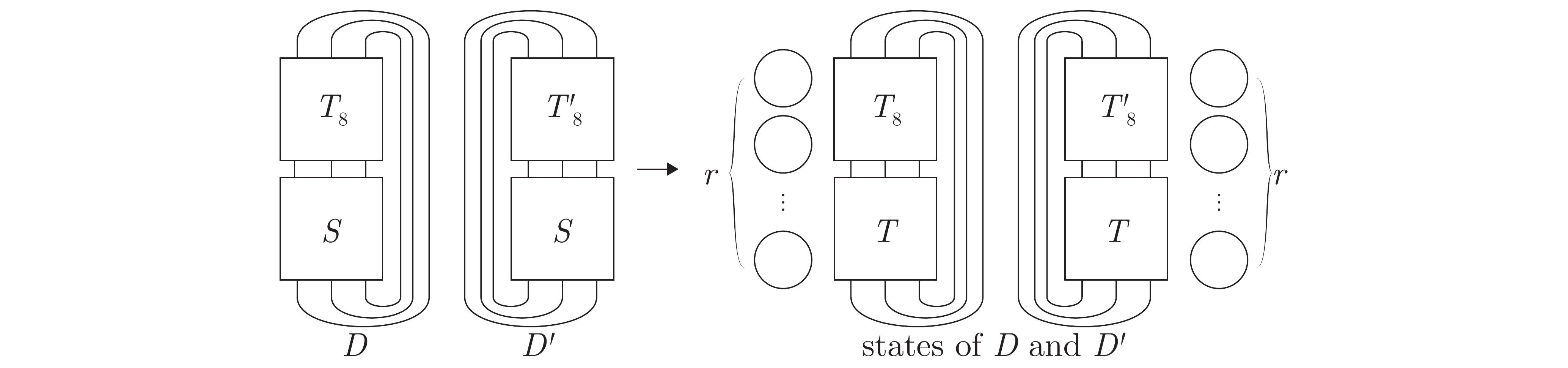}
 \caption{The states of $cl(T_{8}S)$ and $cl(T'_{8}S)$}\label{Gamma8-2}
\end{figure}
Then the value $P_{M}(D)=P_{M}(cl(T_{8}S))$ can be expressed as the value based on $P_{M}(O_{r}\sqcup cl(T_{8}T))$ by applying operations $*$ and $\bullet$.
In the same manner, the value $P_{M}(D')=P_{M}(cl(T'_{8}S))$ can be expressed as the value based on $P_{M}(O_{r}\sqcup cl(T'_{8}T))$ by applying operations $*$ and $\bullet$.
Therefore, if $P_{M}(O_{r}\sqcup cl(T_{8}T))$ and $P_{M}(O_{r}\sqcup cl(T'_{8}T))$ are the same for each $T=s_0$, $s_1$, $\cdots$, $s_{13}$, then $P_{M}(D)=P_{M}(D')$.

By calculating the invariant $P_{M}(cl(T_{8}T))$ and $P_{M}(cl(T'_{8}T))$ valued in $A$ as depicted in Fig. \ref{Gamma8-1},
\begin{figure}[h!]
 \centering
 \includegraphics[width = 12cm]{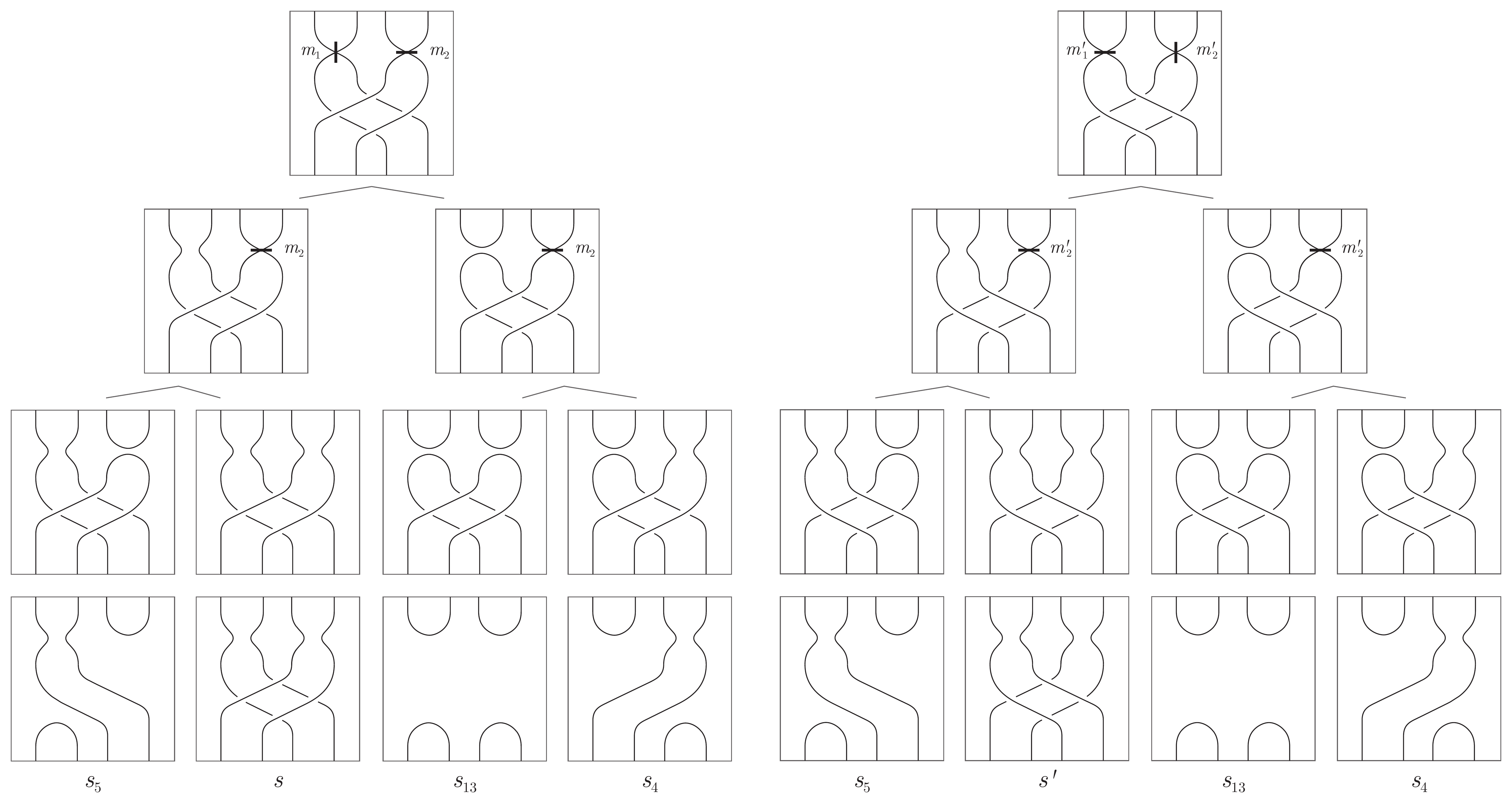}
 \caption{The resolving tree of $\Gamma_{8}$}\label{Gamma8-1}
\end{figure}
one can obtain two equations
$$P_{M}(cl(T_{8}T))= \left(P_{M}(cl(s_{5}T))\bullet P_{M}(cl(sT))\right)\bullet \left(P_{M}(cl(s_{13}T))\bullet P_{M}(cl(s_{4}T))\right)$$
and
$$P_{M}(cl(T'_{8}T))= \left(P_{M}(cl(s_{5}T))\bullet P_{M}(cl(s'T))\right)\bullet \left(P_{M}(cl(s_{13}T))\bullet P_{M}(cl(s_{4}T))\right).$$
Two equations are different between $P_{M}(cl(sT))$ and $P_{M}(cl(s'T))$.
Then the values $P_{M}(cl(sT))$ and $P_{M}(cl(s'T))$ are as follows ;
$$
\begin{array}{c|cccccccccccccc}
  T                     & s_0 & s_1 & s_2 & s_3 & s_4 & s_5 & s_6 & s_7 & s_8 & s_9 & s_{10} & s_{11} & s_{12} & s_{13}  \\
  \hline
  P_{M}(cl(sT)) & x_0  & a_1 & x_2 & a_1 & a_3 & a_3 & a_2 & a_2 & a_2 & a_2 & x_{10} & a_1 & a_1 & a_2 \\
  P_{M}(cl(s'T)) & y_0 & a_1 & y_2 & a_1 & a_3 & a_3 & a_2 & a_2 & a_2 & a_2 & y_{10} & a_1 & a_1 & a_2
\end{array}
$$
where \\
{\footnotesize
$x_{0}=$
$\left( \left( (a_{2}*a_{1})*(a_{3}*a_{2}) \right) * \left( (a_{3}*a_{2})*(a_{4}*a_{3}) \right) \right) 
* \left( \left( (a_{1}*a_{2})*(a_{2}*a_{3}) \right) * \left( (a_{2}*a_{3})*(a_{3}*a_{4}) \right) \right) $
$y_{0}=$
$\left( \left( (a_{4}*a_{3})*(a_{3}*a_{2}) \right) * \left( (a_{3}*a_{2})*(a_{2}*a_{1}) \right) \right) 
* \left( \left( (a_{3}*a_{4})*(a_{2}*a_{3}) \right) * \left( (a_{2}*a_{3})*(a_{1}*a_{2}) \right) \right) $
$x_{2}=$
$\left( \left( (a_{3}*a_{2})*(a_{4}*a_{3}) \right) * \left( (a_{4}*a_{3})*(a_{5}*a_{4}) \right) \right) 
* \left( \left( (a_{2}*a_{1})*(a_{3}*a_{2}) \right) * \left( (a_{3}*a_{2})*(a_{4}*a_{3}) \right) \right) $
$y_{2}=$
$\left( \left( (a_{3}*a_{4})*(a_{2}*a_{3}) \right) * \left( (a_{2}*a_{3})*(a_{1}*a_{2}) \right) \right) 
* \left( \left( (a_{4}*a_{5})*(a_{3}*a_{4}) \right) * \left( (a_{3}*a_{4})*(a_{2}*a_{3}) \right) \right) $
$x_{10}=$
$\left( \left( (a_{4}*a_{3})*(a_{3}*a_{2}) \right) * \left( (a_{3}*a_{2})*(a_{4}*a_{3}) \right) \right) 
* \left( \left( (a_{3}*a_{2})*(a_{2}*a_{1}) \right) * \left( (a_{2}*a_{1})*(a_{3}*a_{2}) \right) \right) $
$y_{10}=$
$\left( \left( (a_{2}*a_{3})*(a_{1}*a_{2}) \right) * \left( (a_{1}*a_{2})*(a_{2}*a_{3}) \right) \right) 
* \left( \left( (a_{3}*a_{4})*(a_{2}*a_{3}) \right) * \left( (a_{2}*a_{3})*(a_{3}*a_{4}) \right) \right) $.
}
The values $P_{M}(cl(sT))$ and $P_{M}(cl(s'T))$ are the same except $T= s_{0}, s_{2}, s_{10}$.
Since $a_{n}=a_{n+2}$, we obtain $x_{0}=y_{0}$, $x_{2}=y_{2}$ and $x_{10}=y_{10}$.
Then $P_{M}(cl(sT))=P_{M}(cl(s'T))$ and hence $P_{M}(O_{r}\sqcup cl(T_{8}T))$ and $P_{M}(O_{r}\sqcup cl(T'_{8}T))$.
Therefore $P_{M} : \mathcal{F} \rightarrow A$ is invariant under $\Gamma_{8}$.
\end{proof}

\begin{theorem}\label{Thm1}
Let $(A;*,\bullet,\{a_{n}\})$ be a marked Kauffman bracket magma satisfying the following conditions : 
\begin{enumerate}
\item $a_{n}= a_{n+2}$, for all $n\in \Neal$,
\item $(a* b) \bullet (c * d) = (a \bullet c) * (b \bullet d)$, for all $a,b,c,d \in A$.
\end{enumerate}
Then $P_{M} : \mathcal{F} \rightarrow A$ is invariant under Yoshikawa moves except $\Gamma_{1}, \Gamma_{6}$ and $\Gamma_{6}'$.
\end{theorem}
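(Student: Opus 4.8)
The plan is to obtain the theorem as a direct assembly of the three lemmas already proved, since the two hypotheses of the theorem are exactly the hypotheses those lemmas require. First I would invoke Lemma \ref{LemWD} to guarantee that $P_{M} : \mathcal{F} \to A$ is a well-defined function on admissible marked graph diagrams; this uses only the entropic property of $\bullet$ built into Definition \ref{DefMKBM}, so it holds with no further assumption and in particular is independent of the order in which the markers are spliced.

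Next I would dispose of the type $1$ moves other than $\Gamma_{1}$. By Lemma \ref{LemGamma2345}, $P_{M}$ is invariant under $\Gamma_{2}, \Gamma_{3}, \Gamma_{4}, \Gamma_{4}'$ and $\Gamma_{5}$ for every marked Kauffman bracket magma, again requiring neither hypothesis (1) nor (2); there the moves are reduced to comparisons of spliced diagrams together with invariance under $\Gamma_{2}$, which itself follows because $P$ is a regular isotopy invariant valued in the underlying Kauffman bracket magma $(A;*,\{a_{n}\})$.

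It then remains to treat the type $2$ moves $\Gamma_{7}$ and $\Gamma_{8}$, and this is precisely where the two standing hypotheses are consumed. Condition (2), the mixed relation $(a*b)\bullet(c*d)=(a\bullet c)*(b\bullet d)$, is verbatim the hypothesis of Lemma \ref{LemGamma7and8} and is what lets one interchange the splicing of a classical crossing with the splicing of a marked vertex; condition (1), $a_{n}=a_{n+2}$, is what forces the two columns of tangle-closure values computed in that lemma to agree (the entry $a_{1}$ against $a_{3}$ in the $3$-tangle table, and $x_{0},x_{2},x_{10}$ against $y_{0},y_{2},y_{10}$ in the $4$-tangle table). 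Applying Lemma \ref{LemGamma7and8} under exactly conditions (1) and (2) therefore yields invariance under $\Gamma_{7}$ and $\Gamma_{8}$, and collecting the three lemmas gives invariance under all Yoshikawa moves except $\Gamma_{1}, \Gamma_{6}$ and $\Gamma_{6}'$, as claimed.

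I do not expect a genuine obstacle at the level of the theorem itself: all of the combinatorial labor — independence from the order of markers, the reductions for $\Gamma_{2}$ through $\Gamma_{5}$, and above all the case analysis over the five elementary $3$-tangles and the fourteen elementary $4$-tangles — has already been discharged inside the lemmas. The only point demanding care is bookkeeping: confirming that hypotheses (1) and (2) match exactly the assumptions under which Lemma \ref{LemGamma7and8} was established, and noting explicitly that $\Gamma_{1}, \Gamma_{6}, \Gamma_{6}'$ are excluded because none of the three lemmas addresses them, these being deferred to the $\rho$- and $\phi$-constructions of the subsequent subsections.
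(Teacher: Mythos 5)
Your proposal is correct and matches the paper's proof, which likewise deduces the theorem directly from Lemmas \ref{LemWD}, \ref{LemGamma2345} and \ref{LemGamma7and8}; you have merely spelled out the bookkeeping (that hypotheses (1) and (2) are exactly those consumed by Lemma \ref{LemGamma7and8}, while the other two lemmas need no extra assumptions) that the paper leaves implicit.
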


\begin{proof}
By the construction of $P_{M}$ and Lemma \ref{LemWD}, \ref{LemGamma2345} and \ref{LemGamma7and8}, this theorem holds. 
\end{proof}

\subsection{Kauffman bracket magma and Yoshikawa moves $\Gamma_{1}, \Gamma_{6}$ and $\Gamma_{6}'$}

In this section, we discuss on the invariance under $\Gamma_{1}$, $\Gamma_{6}$ and $\Gamma_{6}'$.

\begin{lemma}\label{LemGamma6}
Let $(A;*,\bullet,\{a_{n}\})$ be a marked Kauffman bracket magma satisfying 
$(a* b) \bullet (c * d) = (a \bullet c) * (b \bullet d)$ for all $a, b, c, d \in A$.
If $a_{n+1}\bullet a_{n}=a_{n}= a_{n}\bullet a_{n+1}$ for all $n\in \Neal$,
then $P_{M}$ is invariant under Yoshikawa moves $\Gamma_{6}$ and $\Gamma_{6}'$.
\end{lemma}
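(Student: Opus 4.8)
The plan is to follow the template established in the proof of Lemma \ref{LemGamma7and8}, now adapted to the moves $\Gamma_{6}$ and $\Gamma_{6}'$ in Fig. \ref{UnoriYoshikawaMoves}. Let $D$ and $D'$ be marked graph diagrams with $D'$ obtained from $D$ by a single application of $\Gamma_{6}$. Since $P_{M}$ does not depend on the order in which markers are spliced (Lemma \ref{LemWD}), and since the entropic mixing condition $(a* b) \bullet (c * d) = (a \bullet c) * (b \bullet d)$ allows us to interchange freely the splicing of a classical crossing with the splicing of a marked vertex, I would first splice every marker lying outside the region of the move. Thus one may assume, exactly as in the proofs for $\Gamma_{7}$ and $\Gamma_{8}$, that $D$ and $D'$ carry no markers except the marked vertex appearing in $\Gamma_{6}$.

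Next I would realize $D$ as the closure $cl(T_{6}S)$ of the sum of the tangle $T_{6}$ supporting the move with an auxiliary tangle $S$ all of whose crossings are classical, in the manner of Fig. \ref{Gamma7-2}. Applying the Kauffman skein triple at every crossing of $S$ and again invoking the mixing relation to commute all $*$-operations past the single $\bullet$-operation, each resulting state becomes a disjoint union $O_{r}\sqcup cl(T_{6}T)$, where $O_{r}$ is the $r$-component trivial diagram and $T$ ranges over the elementary tangle diagrams of Fig. \ref{Elem3tangle}. The identical reduction applies to $D'=cl(T_{6}'S)$. Consequently the invariance $P_{M}(D)=P_{M}(D')$ reduces to the finitely many identities $P_{M}(O_{r}\sqcup cl(T_{6}T))=P_{M}(O_{r}\sqcup cl(T_{6}'T))$, one for each elementary tangle $T$.

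To verify these identities I would splice the marked vertex via the marked skein relation (\ref{MarkedSkeinRel}), so that each closure becomes a single $\bullet$-product of two trivial-link values. Because $\Gamma_{6}$ passes the marked vertex across a turnback, the two resolutions of that vertex yield trivial diagrams whose component counts differ by one; hence the typical leaf of the resolving tree on the $D$-side has the form $a_{n+1}\bullet a_{n}$ (or $a_{n}\bullet a_{n+1}$), while the matching leaf on the $D'$-side is simply $a_{n}$. The hypothesis $a_{n+1}\bullet a_{n}=a_{n}=a_{n}\bullet a_{n+1}$ equates these leaf by leaf, and since both sides carry the same disjoint trivial component $O_{r}$, the full $*$-$\bullet$ expressions agree. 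This gives $P_{M}(O_{r}\sqcup cl(T_{6}T))=P_{M}(O_{r}\sqcup cl(T_{6}'T))$ for every $T$, hence $P_{M}(D)=P_{M}(D')$. The argument for $\Gamma_{6}'$ is the mirror image, using the opposite turnback, which is precisely why both forms $a_{n+1}\bullet a_{n}=a_{n}$ and $a_{n}\bullet a_{n+1}=a_{n}$ of the hypothesis are needed.

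The step I expect to be the main obstacle is the index bookkeeping: I must confirm that the two resolutions of the single marked vertex in $cl(T_{6}T)$ really do produce trivial links with consecutive indices $n$ and $n+1$ (so that the differing leaves are exactly $a_{n+1}\bullet a_{n}$ and $a_{n}\bullet a_{n+1}$, not some other pairing), and that the corresponding leaf on the $D'$-side collapses to $a_{n}$. Unlike Lemma \ref{LemGamma7and8}, which required the periodicity $a_{n}=a_{n+2}$ to reconcile reindexed trivial links after two $\bullet$-splicings, here a single $\bullet$-splicing delivers the relation directly, so no periodicity is invoked; the delicate point is only matching the component counts correctly across the turnback in each elementary case.
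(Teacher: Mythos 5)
Your proposal is correct and follows essentially the same route as the paper: splice all classical crossings (using the mixing relation $(a*b)\bullet(c*d)=(a\bullet c)*(b\bullet d)$ to reorder splicings), observe that each state of $D$ is a trivial link $O_{r}$ together with a crossing-free diagram carrying the single marked vertex whose two resolutions are trivial links with consecutive component counts, so the leaves compare as $a_{r+2}\bullet a_{r+1}$ versus $a_{r+1}$, and the hypothesis $a_{n+1}\bullet a_{n}=a_{n}=a_{n}\bullet a_{n+1}$ closes the gap (the two forms handling $\Gamma_{6}$ and $\Gamma_{6}'$ respectively). The only cosmetic difference is that you phrase the state reduction in the tangle-closure language of the $\Gamma_{7}$/$\Gamma_{8}$ proof, which is heavier machinery than needed here but leads to the identical leaf-by-leaf comparison.
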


\begin{proof}
Let $D$ be an oriented marked graph diagram and let $D'$ be an oriented marked graph diagram obtained from $D$ by applying only once Yoshikawa move $\Gamma_{6}$.
Then the diagrams $D$ and $D'$ are the same except the part of $\Gamma_{6}$.
By the construction of $P_{M}$, 
without loss of generality, 
we can assume that there are no markers outside of the part of $\Gamma_{6}$.
That is, $D$ has only one marker, say $m$, and $D'$ has no markers as depicted in Fig. \ref{Gamma6pf(unori)1}.

\begin{figure}[h!]
 \centering
 \includegraphics[width = 12cm]{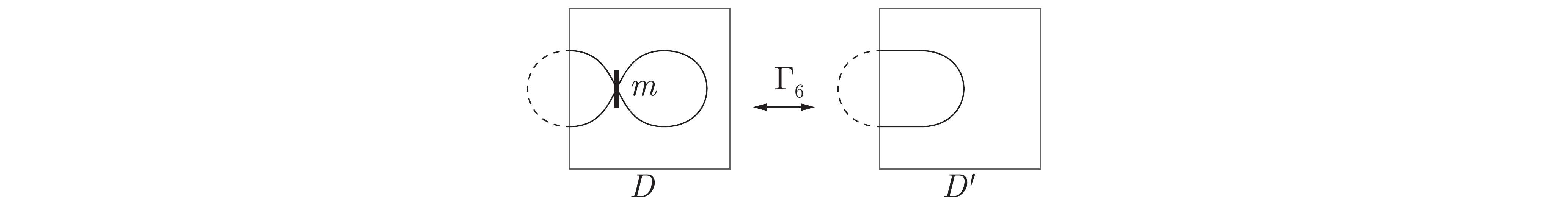}
 \caption{Yoshikawa move $\Gamma_{6}$} \label{Gamma6pf(unori)1}
\end{figure}

From the relation $(a* b) \bullet (c * d) = (a \bullet c) * (b \bullet d)$, we can splice at all classical crossings of $D$ and $D'$.
Then each state $sD$ of $D$ is expressed as the disjoint union of the $r$-component trivial link diagram and the diagram which has one marker $m$ and has no classical crossings and one $sD'$ of $D'$ is the $(r+1)$-component trivial link diagram illustrated in Fig. \ref{Gamma6pf(unori)2}.

\begin{figure}[h!]
 \centering
 \includegraphics[width = 12cm]{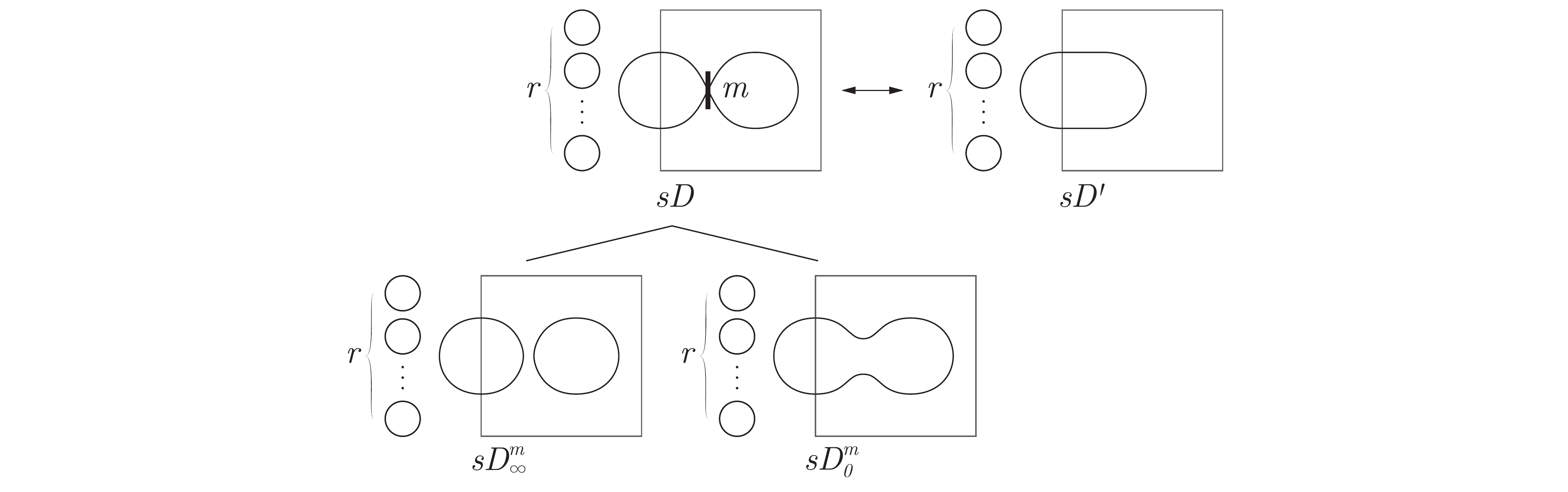}
 \caption{The resolving tree of $\Gamma_{6}$} \label{Gamma6pf(unori)2}
\end{figure}

By splicing at the marker $m$, the calculation of $sD$ and $sD'$ are as follows :
$$P_{M}(sD)=P_{M}(sD^{m})=P_{M}(sD^{m}_{\infty})\bullet P_{M}(sD^{m}_{0})=P(sD^{m}_{\infty})\bullet P(sD^{m}_{0})=a_{r+2}\bullet a_{r+1}$$
and 
$$P_{M}(sD')=P(sD')=a_{r+1}.$$
Since $a_{n+1}\bullet a_{n}=a_{n}$ for any $n\in \mathbb{N}$, $P_{M}(sD) = a_{r+1} \bullet a_{r} = a_{r} = P_{M}(sD')$
and then $P_{M}(D) = P_{M}(D')$. Hence $P_{M}$ is invariant under $\Gamma_{6}$.
In the same manner, $P_{M}$ is invariant under $\Gamma_{6}'$ by the condition $a_{n}\bullet a_{n+1}=a_{n}$.
\end{proof}

From Theorem \ref{Thm1} and Lemma \ref{LemGamma6}, the following theorem holds.

\begin{theorem}\label{Thm2}
Let $\mathcal{F}$ be the set of adimissible marked graph diagrams.
For a marked Kauffman bracket magma $(A;*,\bullet,\{a_{n}\})$ satisfying three conditions : 
\begin{enumerate}
  \item $a_{n}= a_{n+2}$, for all $n\in \Neal$,
  \item $a_{n+1}\bullet a_{n}=a_{n}= a_{n}\bullet a_{n+1}$, for all $n\in \Neal$,
  \item $(a* b) \bullet (c * d) = (a \bullet c) * (b \bullet d)$, for all $a,b,c,d \in A$.
\end{enumerate}
Then $P_{M} : \mathcal{F} \rightarrow A$ is invariant under Yoshikawa moves except $\Gamma_{1}$.
\end{theorem}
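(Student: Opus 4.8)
The plan is to obtain the statement simply by combining the two results already established, since the three hypotheses are tailored exactly to their needs. First I would note that hypotheses (1) and (3) of the theorem coincide with the two hypotheses of Theorem \ref{Thm1} (namely $a_n = a_{n+2}$ together with the mixed entropic law). Invoking that theorem gives at once that $P_M$ is invariant under every Yoshikawa move except $\Gamma_1$, $\Gamma_6$ and $\Gamma_6'$.

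Next, I would observe that hypotheses (2) and (3) are precisely the hypotheses of Lemma \ref{LemGamma6}. Applying the lemma yields that $P_M$ is invariant under $\Gamma_6$ and $\Gamma_6'$ as well. The mixed entropic law (3) is shared by both invocations, so there is no tension in using them simultaneously. Combining the two conclusions, the only move not covered is $\Gamma_1$, which is exactly the assertion.

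The proof therefore carries no genuine obstacle: all of the substantive case-checking has already been discharged elsewhere — the well-definedness and the moves $\Gamma_2,\dots,\Gamma_5$ in Lemmas \ref{LemWD} and \ref{LemGamma2345}, the delicate elementary-tangle enumeration for $\Gamma_7$ and $\Gamma_8$ in Lemma \ref{LemGamma7and8}, and the closure computation for $\Gamma_6,\Gamma_6'$ in Lemma \ref{LemGamma6}. The present theorem is essentially bookkeeping, recording which algebraic conditions on $(A;*,\bullet,\{a_n\})$ jointly guarantee invariance under the full set of Yoshikawa moves apart from $\Gamma_1$. The one remaining move $\Gamma_1$ is the analogue of Reidemeister move I and is left to be treated separately by a map $\rho$ in the spirit of Theorem \ref{Thm0}.
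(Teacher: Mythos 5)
Your proposal is correct and matches the paper exactly: the paper derives Theorem \ref{Thm2} by the same two-step combination, citing Theorem \ref{Thm1} for all moves except $\Gamma_{1}$, $\Gamma_{6}$, $\Gamma_{6}'$ and Lemma \ref{LemGamma6} for $\Gamma_{6}$ and $\Gamma_{6}'$. Nothing further is needed.
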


For a marked Kauffman bracket magma $(A; *, \bullet, \{a_{n}\})$, 
consider the invariant $P_{M}: {\mathcal F} \rightarrow A$ for admissible marked graph diagrams 
under Yoshikawa moves except $\Gamma_{1}$.
Let $Map(A)$ be the set of maps from $A$ to itself.

\begin{theorem}\label{Thm3}
Assume that there is a map $\rho$ from $\mathcal{F}$ to $Map(A)$ defined by 
for any marked graph diagram $D$,
$\rho(D)=\rho_{D}$ such that 
$$\rho_{D}(P_{M}(D))=\rho_{D'}(P_{M}(D'))$$
where $D'$ is obtained from $D$ by applying once a positive or negative Yoshikawa move $\Gamma_{1}$ 
as depicted in Fig. \ref{RM1map} 
and $\rho$ is invariant under $\Gamma_{2}, \cdots, \Gamma_{8}$.
Then there exists a function $P_{M}^{\rho} : \mathcal{F} \rightarrow A$ 
defined by $P_{M}^{\rho}(D)=\rho_{D}(P_{M}(D))$ for each marked graph diagram $D$, 
which is invariant under all Yoshikawa moves.
\end{theorem}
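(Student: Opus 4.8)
The plan is to mirror the argument of Theorem \ref{Thm0}, now carried out over the set $\mathcal{F}$ of admissible marked graph diagrams, with the operation pair $(*,\bullet)$ in place of $*$ alone and the full list of Yoshikawa moves in place of the Reidemeister moves. First I would record that the assignment $P_M^\rho(D)=\rho_D(P_M(D))$ is genuinely a function on diagrams, viewing it as the composite $\mathcal{F}\rightarrow Map(A)\times A\rightarrow A$ sending $D\mapsto(\rho_D,P_M(D))\mapsto\rho_D(P_M(D))$; here $P_M$ is well-defined by Lemma \ref{LemWD} and $\rho_D\in Map(A)$ is supplied by the map $\rho$ in the hypothesis, so the composite is unambiguous.

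The verification of invariance then splits cleanly according to the type of Yoshikawa move, and the two hypotheses on $\rho$ are arranged to cover the list exactly once. For $\Gamma_1$, the first hypothesis asserts precisely $\rho_D(P_M(D))=\rho_{D'}(P_M(D'))$ whenever $D'$ is obtained from $D$ by a single positive or negative $\Gamma_1$; this is literally the statement $P_M^\rho(D)=P_M^\rho(D')$, so invariance under $\Gamma_1$ is immediate. For each of the remaining moves $\Gamma_2,\dots,\Gamma_8$ (in particular $\Gamma_6$ and $\Gamma_6'$) I would combine two facts: by Theorem \ref{Thm2} we have $P_M(D)=P_M(D')$, and by the second hypothesis $\rho$ is invariant under these moves, i.e. $\rho_D=\rho_{D'}$ as elements of $Map(A)$. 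Substituting both gives $P_M^\rho(D)=\rho_D(P_M(D))=\rho_{D'}(P_M(D'))=P_M^\rho(D')$.

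Having checked that $P_M^\rho$ is unchanged by every Yoshikawa move, I would conclude that it descends to a well-defined invariant on equivalence classes under all Yoshikawa moves, which is the assertion of the theorem. Since the proof is essentially a bookkeeping of the stated hypotheses, there is no substantive obstacle; the only point deserving care is to confirm the clean partition of the moves—$\Gamma_1$ handled by the first hypothesis on $\rho$, and $\Gamma_2,\dots,\Gamma_8$ handled by pairing the second hypothesis with the invariance of $P_M$ from Theorem \ref{Thm2}—so that no move is left unaccounted for and none requires a condition not already assumed.
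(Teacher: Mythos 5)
Your proposal is correct and follows essentially the same route as the paper: define $P_M^\rho$ via the composite $\mathcal{F}\rightarrow Map(A)\times A\rightarrow A$, get $\Gamma_1$-invariance directly from the first hypothesis on $\rho$, and get invariance under $\Gamma_2,\dots,\Gamma_8$ by combining the invariance of $P_M$ (assumed in the setup preceding the theorem, via Theorem \ref{Thm2}) with the assumed invariance of $\rho$ under those moves. No substantive difference from the paper's argument.
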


\begin{proof}
Construct a function $P_{M}^{\rho} : {\mathcal F} \rightarrow A$ 
defined by $P_{M}^{\rho}(D)=\rho_{D}(P_{M}(D))$ for every marked graph diagram $D$,
where
$${\mathcal F} \rightarrow Map(A)\times A \rightarrow A$$ defined by
$$D \mapsto (\rho_{D}, P_{M}(D)) \mapsto \rho_{D}(P_{M}(D)).$$
By hypothesis, $P_{M}^{\rho}$ is invariant under $\Gamma_{1}$.
Since $\rho : {\mathcal F} \rightarrow Map(A)$ and $P_{M}$ are invariants under $\Gamma_{2}, \cdots, \Gamma_{8}$, 
$P_{M}^{\rho}$ is invariant for surface-links.
\end{proof}


\subsection{Examples of marked Kauffman bracket magmas and their invariants}

In this section, we will introduce some examples of marked Kauffman bracket magmas and their invariants.

For a given marked Kauffman bracket magma $(A; *, \bullet, \{a_{n}\})$, 
we can calculate the invariant values of any marked graph diagrams.
For example, 
let $0_{1}$ and $2_{1}^{-1}$ be marked graph diagrams of the standard sphere and the positive standard projective plane, respectively.
Then $P_{M}(0_{1})=1$ and 
$P_{M}(2_{1}^{-1})=(a_{2}*a_{1})\bullet (a_{1}*a_{2})$ as depicted in Fig. \ref{ExaCalculationP2}.

\begin{figure}[h!]
 \centering
 \includegraphics[width = 12cm]{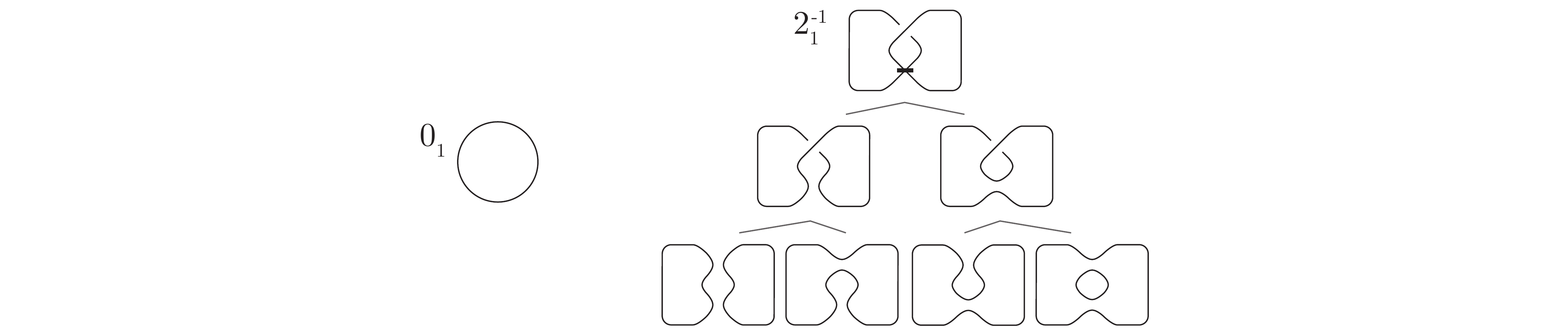}
 \caption{The calculation of $0_{1}$ and $2_{1}^{-1}$} \label{ExaCalculationP2}
\end{figure}

For the marked graph diagram $6_{1}^{0, 1}$ in Yoshikawa table, as shown in Fig. \ref{ExaCalculationMGD6},
$P_{M}(6_{1}^{0, 1})=(a_{2}\bullet P(D'))\bullet (a_{3}\bullet a_{2})$
where $D'$ is the classical diagram obtained from $6_{1}^{0,1}$ by splicing $x$ and $y$ at markers $m_{1}$ and $m_{2}$, respectively.

\begin{figure}[h!]
 \centering
 \includegraphics[width = 12cm]{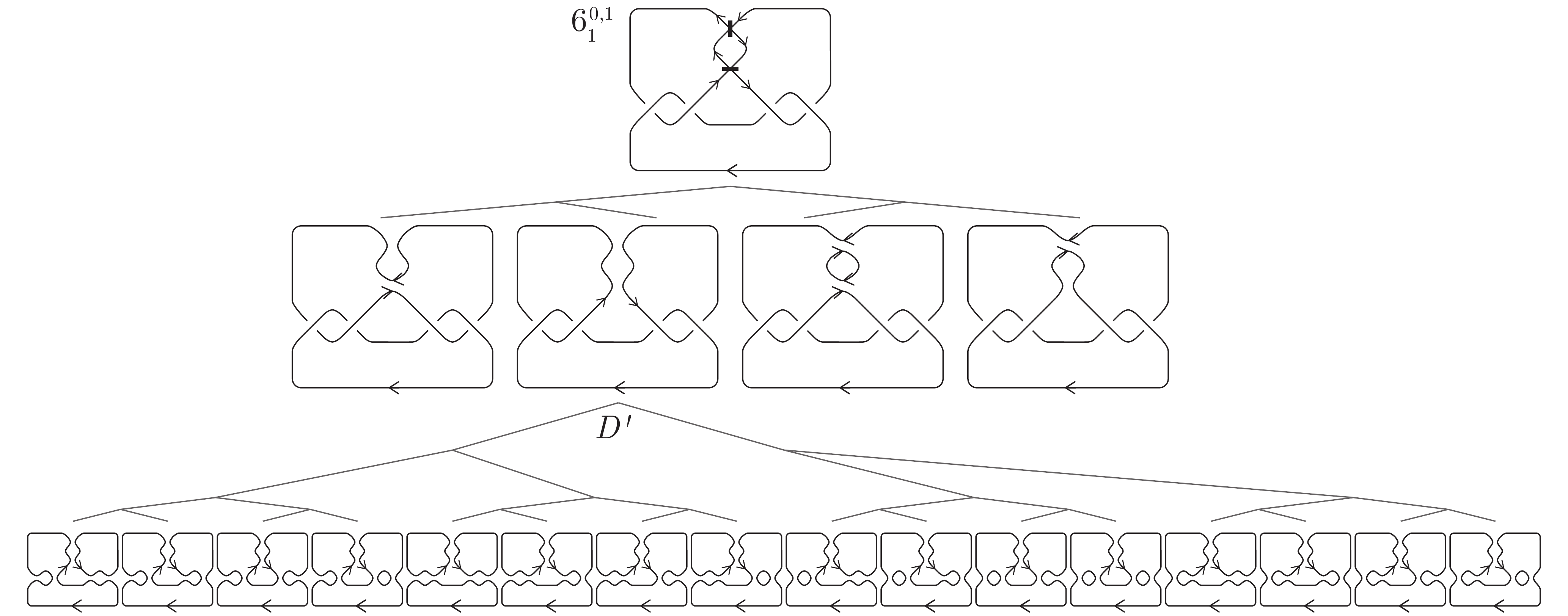}
 \caption{The calculation of $6_{1}^{0, 1}$} \label{ExaCalculationMGD6}
\end{figure}

\begin{example}\label{ExaMKB}
Let $\mathbb{Z}[p, r, x, y]$ be a polynomial ring equipped with two binary operations $*$ and $\bullet$ defined by 
$$a*b= pa+(-1-p)b+r,$$
$$a \bullet b = xa +yb.$$
For a fixed element $c \in \mathbb{Z}[p,r,x]$, take a sequence $a_{n} = c$ for each $n\in \mathbb{N}$.
Then the quadruple $(\mathbb{Z}[p,r,x]; *,\bullet, \{a_{n}\} )$ is a marked Kauffman bracket magma.
By Lemma \ref{LemGamma2345}, 
there exists the invariant $P_{M} : \mathcal{F} \rightarrow \mathbb{Z}[p, r, x, y]$ 
under $\Gamma_{2}$, $\Gamma_{3}$, $\Gamma_{4}$, $\Gamma_{4}'$ and $\Gamma_{5}$. 
For instance, 
$P_{M}(2_{1}^{-1})=$
$(x+y)(-c+r)$
and 
$P_{M}(6_{1}^{0, 1})=$
$c(x^{2}+2xy+y^{2})$ 
where $P(D')=c$.
\end{example}

\begin{example}\label{ExaMKB678}
In Example \ref{ExaMKB}, 
the marked Kauffman bracket magma $(\mathbb{Z}[p, r, x]; *,$ $\bullet,$ $\{a_{n}\} )$, obtained by substituting $y=1-x$, satisfies 
\begin{enumerate}
  \item $a_{n}= a_{n+2}$, for all $n\in \Neal$,
  \item $a_{n+1}\bullet a_{n}=a_{n}= a_{n}\bullet a_{n+1}$, for all $n\in \Neal$,
  \item $(a* b) \bullet (c * d) = (a \bullet c) * (b \bullet d)$ for all $a, b, c, d, \in \mathbb{Z}[p, r, x]$.
\end{enumerate}
By Theorem \ref{Thm2}, there exists the invariant $P_{M} : \mathcal{F} \rightarrow \mathbb{Z}[p, r, x]$ 
under Yoshikawa moves except $\Gamma_{1}$.

Define a map $\rho : \mathcal{F} \rightarrow \mathbb{Z}[p, r, x]$ by 
for any marked graph diagrams $D$, 
$\rho(D)=\rho_{D}$
where for any element $t \in \mathbb{Z}[p, r, x]$ 
\begin{equation*}
\rho_{D}(t) = \left\{
\begin{array}{cc} 
    t , & \text{if } c(D) \text{ is even}, \\
    -t+r ,& \text{if } c(D) \text{ is odd}, 
\end{array}\right.
\end{equation*}
and $c(D)$ is the number of classical crossings of $D$.
Since $\rho_{D}(P_{M}(D))=\rho_{D'}(P_{M}(D'))$
where $D'$ is obtained from $D$ by applying once a positive or negative Yoshikawa move $\Gamma_{1}$ 
as depicted in Fig. \ref{RM1map} 
and since $\rho$ is invariant under Yoshikawa moves except $\Gamma_{1}$,
the map $P^{\rho}_{M} : \mathcal{F} \rightarrow \mathbb{Z}[p, r, x]$ is invariant for surface-links by Theorem \ref{Thm3}.
Similar to Example \ref{ExaKB}, $P^{\rho}_{M}$ has only two values $1$ and $c$. 
\end{example}

The following example is a generalization of the Kauffman bracket magma in Example \ref{ExaKauffmanBracketInv}
and it corresponds to the Kauffman bracket polynomial. 

\begin{example}\label{ExaMarkedKauffmanBracketInv}
Let $\mathbb{Z}[A^{\pm 1}, x, y]$ be a polynomial ring equipped with two binary operations $*$ and $\bullet$ defined by 
$$a*b= Aa+A^{-1}b,$$
$$a \bullet b = xa +yb.$$
Consider the sequence $a_{n}=(-A^{2}-A^{-2})^{n-1}$ for each $n\in \mathbb{N}$.
Then the quadruple $(\mathbb{Z}[A^{\pm 1}, x, y] ; *, \bullet, \{a_{n}\})$ is a marked Kauffman bracket magma.
By Lemma \ref{LemGamma2345}, 
there exists the invariant $P_{M} : \mathcal{F} \rightarrow \mathbb{Z}[A^{\pm 1}, x, y]$ 
under $\Gamma_{2}$, $\Gamma_{3}$, $\Gamma_{4}$, $\Gamma_{4}'$ and $\Gamma_{5}$. 
For instance, $P_{M}(2_{1}^{-1})=$
$-A^{3}x-A^{-3}y$
and 
$P_{M}(6_{1}^{0, 1})=$
$(x^{2}+y^{2})(-A^{2}-A^{-2})+xy(A^{8}+A^{4}+4+A^{-4}+A^{-8})$ 
where $P(D')=A^{8}+2+A^{-8}$.
\end{example}

\begin{example}\label{ExaMKBInvAll}
Let $R$ be a quotient ring of $\mathbb{Z}[A^{\pm 1},x,y]$ by the ideal generated by
\begin{equation}\label{EqMKB-Kauffman}
 A^{4}+2+A^{-4} = x+ y(-A^{2}-A^{-2}) = y+ x(-A^{2}-A^{-2}) =1,
 \end{equation}
with the same binary operations and the sequence $a_{n}$ in Example \ref{ExaMarkedKauffmanBracketInv}.
Then $(R; *, \bullet, \{a_{n}\})$ is a marked Kauffman bracket magma.
From the equalities in (\ref{EqMKB-Kauffman}) with simple calculations, 
it follows that $(R; *,\bullet, \{a_{n}\})$ satisfies
\begin{enumerate}
  \item $a_{n}= a_{n+2}$, for all $n\in \Neal$,
  \item $a_{n+1}\bullet a_{n}=a_{n}= a_{n}\bullet a_{n+1}$, for all $n\in \Neal$,
  \item $(a* b) \bullet (c * d) = (a \bullet c) * (b \bullet d)$, for all $a,b,c,d \in R$.
\end{enumerate}
and by Theorem \ref{Thm2}, the map $P_{M} : \mathcal{F} \rightarrow A$ is an invariant under Yoshikawa moves except $\Gamma_{1}$. 

From now on, consider the set $\mathcal{F}^{ori}$ of oriented admissible marked graph diagrams 
and by abuse of notation, we denote $P_{M}|_{\mathcal{F}^{ori}}:\mathcal{F}^{ori} \rightarrow R$ by $P_{M}$.
Define a map $\rho : {\mathcal F}^{ori} \rightarrow Map(R)$ by for any diagram $D\in \mathcal{F}^{ori}$,
$\rho(D)=\rho_{D}$
where 
$$\rho_{D}(t)=(-A^{3})^{-p(D)+n(D)}t$$ 
for any element $t\in R$ 
and $p(D)$ (resp. $n(D)$) is the number of positive (resp. negative) classical crossings of $D$. 
It is easy to show that $\rho$ is invariant under Yoshikawa moves except $\Gamma_{1}$
and $\rho_{D}(P_{M}(D))=\rho_{D'}(P_{M}(D'))$
where $D'$ is obtained from $D$ by applying once a positive or negative Yoshikawa move $\Gamma_{1}$ 
as depicted in Fig. \ref{RM1map}.
Hence $P_{M}^{\rho} : \mathcal{F}^{ori} \rightarrow R$ is invariant under all Yoshikawa moves.
For the oriented marked graph diagram $6_{1}^{0, 1}$,  
$P_{M}(6_{1}^{0, 1})\equiv x+y$ 
and then 
$P_{M}^{\rho}(6_{1}^{0, 1})\equiv (-A^{3})^{0}(x+y)=x+y$.
\end{example}

\begin{example}\label{ExaMarkedSYLee}
Let $\mathbb{Z}[A^{\pm 1}, x, y, z, w]$ be a polynomial ring equipped with two binary operations $*$ and $\bullet$ defined by 
$$a*b= Aa+A^{-1}b,$$
$$a \bullet b = (x+Ay+A^{-1}z)a +(A^{-1}y+Az+w)b.$$
Consider the sequence $a_{n}=(-A^{2}-A^{-2})^{n-1}$ for each $n\in \mathbb{N}$.
Then the quadruple $(\mathbb{Z}[A^{\pm 1}, x, y, z, w] ; *, \bullet, \{a_{n}\})$ is a marked Kauffman bracket magma.
By Lemma \ref{LemGamma2345}, 
there exists the invariant $P_{M} : \mathcal{F} \rightarrow \mathbb{Z}[A^{\pm 1}, x, y, z, w]$ 
under $\Gamma_{2}$, $\Gamma_{3}$, $\Gamma_{4}$, $\Gamma_{4}'$ and $\Gamma_{5}$. 

For instance, $P_{M}(2_{1}^{-1})=-A^{3}x+(-A^{4}-A^{-4})y+(-A^{2}-A^{-2})z-A^{-3}w$
$=[[2_{1}^{-1}]]$
and 
$P_{M}(6_{1}^{0, 1})=$
$(-A^{2}-A^{-2})(x^{2}+w^{2})+(A^{8}+2+A^{-8})(y^{2}+z^{2})
+(A^{7}-A^{3}+2A^{-1}+A^{-5}+A^{-9})(xy+zw)
+(A^{9}+A^{5}+2A-A^{-3}+A^{-7})(xz+yw)
+(A^{2}+A^{-2})(A^{8}+A^{4}+A^{-4}+A^{-8})yz
+(A^{8}+A^{4}+4+A^{-4}+A^{-8})wx$.
\end{example}

The polynomial $[[ ~ ]]$ introduced by S. Y. Lee in Example \ref{ExaSYLee} is obtained from the invariant $P_{M}$ valued in the marked Kauffman magma in Example \ref{ExaMarkedSYLee}.


\subsection{Further discussion on Yoshikawa moves $\Gamma_{6}$ and $\Gamma_{6}'$}

The condition (2) of Theorem \ref{Thm2} is quite strong, 
because the condition require not only the commutativity of the binary operation $\bullet$, 
but also $a_{n} \bullet a_{n+1} =a_{n}$.
Therefore, instead of Theorem \ref{Thm2}, we can construct an invariant under Yoshikawa moves $\Gamma_{6}$ and $\Gamma_{6}'$ by using a specific map $\phi$.

\begin{theorem}\label{Thm4}
Let $P_{M} : \mathcal{F} \rightarrow A$ be the invariant 
valued in the marked Kauffman bracket magma $(A; *, \bullet, \{a_{n}\})$ in Theorem \ref{Thm1}.
Assume that there is an invariant $\phi : \mathcal{F} \rightarrow Map(A)$ 
under Yoshikawa moves except $\Gamma_{1}$, $\Gamma_{6}$ and $\Gamma_{6}'$
defined by for a diagram $D\in \mathcal{F}$,
$\phi(D)=\phi_{D}$ 
such that 
$$\phi_{D}(P_{M}(D))=\phi_{D'}(P_{M}(D'))$$
where $D'$ is obtained from $D$ by applying once one of Yoshikawa moves $\Gamma_{6}$ and $\Gamma_{6}'$.
Then there exists a function $P^{\phi}_{M} : \mathcal{F} \rightarrow A$ 
defined by $P^{\phi}_{M}(D)=\phi_{D}(P_{M}(D))$ for each marked graph diagram $D$, 
which is invariant under Yoshikawa moves except $\Gamma_{1}$.
\end{theorem}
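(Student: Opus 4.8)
The plan is to follow the same factorization strategy used in the proofs of Theorem \ref{Thm0} and Theorem \ref{Thm3}. First I would realize $P^{\phi}_{M}$ as the composite
$$\mathcal{F} \rightarrow Map(A) \times A \rightarrow A, \qquad D \mapsto (\phi_{D}, P_{M}(D)) \mapsto \phi_{D}(P_{M}(D)),$$
so that $P^{\phi}_{M}(D) = \phi_{D}(P_{M}(D))$ is a well-defined function of the diagram (here $P_{M}$ is already known to be well-defined by Lemma \ref{LemWD}, and $\phi_{D}$ is supplied by hypothesis). Everything then reduces to checking invariance under each Yoshikawa move other than $\Gamma_{1}$, namely $\Gamma_{2}, \Gamma_{3}, \Gamma_{4}, \Gamma_{4}', \Gamma_{5}, \Gamma_{6}, \Gamma_{6}', \Gamma_{7}, \Gamma_{8}$.

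Second, I would split the verification into two cases according to whether the move is one of $\Gamma_{6}, \Gamma_{6}'$. For a move among $\Gamma_{2}, \dots, \Gamma_{5}, \Gamma_{7}, \Gamma_{8}$ relating diagrams $D$ and $D'$, both factors of the composite are individually invariant: $P_{M}(D) = P_{M}(D')$ by Theorem \ref{Thm1}, and $\phi_{D} = \phi_{D'}$ because $\phi$ is assumed invariant under exactly these moves. Hence $P^{\phi}_{M}(D) = \phi_{D}(P_{M}(D)) = \phi_{D'}(P_{M}(D')) = P^{\phi}_{M}(D')$. For a move $\Gamma_{6}$ or $\Gamma_{6}'$, neither factor need be invariant on its own, but the hypothesis supplies precisely the coupled equality $\phi_{D}(P_{M}(D)) = \phi_{D'}(P_{M}(D'))$, that is $P^{\phi}_{M}(D) = P^{\phi}_{M}(D')$. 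Combining the two cases shows that $P^{\phi}_{M}$ is invariant under every Yoshikawa move except $\Gamma_{1}$, which is the assertion.

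The argument is entirely formal and parallels the two earlier theorems, so I expect no genuine computational obstacle. The only point deserving care — and the conceptual content of the statement — is the distinct role played by $\phi$ under $\Gamma_{6}$ and $\Gamma_{6}'$: there the two building blocks $\phi$ and $P_{M}$ are each \emph{not} invariant, and one must invoke the coupled equality $\phi_{D}(P_{M}(D)) = \phi_{D'}(P_{M}(D'))$ rather than separate invariance of the factors. The main thing to state cleanly is that this coupled condition and the separate invariance used for the remaining moves are imposed simultaneously in the hypotheses, so the two cases are mutually compatible and together yield the conclusion.
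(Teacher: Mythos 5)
Your proof is correct and matches the paper's approach: the paper in fact omits a proof of Theorem \ref{Thm4}, leaving it implicit as the evident analogue of the proofs of Theorems \ref{Thm0} and \ref{Thm3}, and your factorization through $Map(A)\times A$ together with the two-case check (separate invariance of $\phi$ and $P_{M}$ for $\Gamma_{2},\dots,\Gamma_{5},\Gamma_{7},\Gamma_{8}$, and the coupled equality $\phi_{D}(P_{M}(D))=\phi_{D'}(P_{M}(D'))$ for $\Gamma_{6},\Gamma_{6}'$) is exactly that argument. No gaps.
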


\begin{remark}\label{RmkThm34}
Both the map $P_{M}^{\phi}$ of Theorem \ref{Thm4} and the map $P_{M}$ of Theorem \ref{Thm3} 
are invariant under Yoshikawa moves except $\Gamma_{1}$.
In Theorem \ref{Thm3}, we can use $P_{M}^{\phi}$ instead of $P_{M}$ 
and then there exists the invariant $(P_{M}^{\phi})^{\rho}$ under all Yoshikawa moves, 
denoted by $P_{M}^{\phi, \rho}$.
\end{remark}

\begin{example}\label{ExaMarkedKauffmanBracketInv-phi}
Let $I$ be the ideal of $\mathbb{Z}[A^{\pm 1},x,y]$ generated by
\begin{equation}\label{EqKauffman}
 A^{4}+2+A^{-4} =1,
\end{equation}
\begin{equation}\label{EqKauffman2}
 x+y(-A^{2}-A^{-2}) = y+x(-A^{2}-A^{-2}).
\end{equation}
Let $R$ be the localization of the quotient ring $\mathbb{Z}[A^{\pm 1},x,y]/I$ by the power of $x+y(-A^{2}-A^{-2})$
with the same binary operations and the sequence $a_{n}$ in Example \ref{ExaMarkedKauffmanBracketInv}. 
Then $(R; *, \bullet, \{a_{n}\})$ is a marked Kauffman bracket magma.
From Equation \ref{EqKauffman} with simple calculations, one can show that $(R; *, \bullet, \{a_{n}\})$ satisfies
\begin{enumerate}
  \item $a_{n}= a_{n+2}$, for all $n\in \Neal$,
  \item $(a* b) \bullet (c * d) = (a \bullet c) * (b \bullet d)$, for all $a,b,c,d \in R$,
\end{enumerate}
and by Theorem \ref{Thm2}, there exists an invariant $P_{M} : \mathcal{F} \rightarrow R$ under Yoshikawa moves except $\Gamma_{1}$, $\Gamma_{6}$ and $\Gamma_{6}'$.

Define a map $\phi : {\mathcal F} \rightarrow Map(R)$ by for any marked graph diagram $D$,
$\phi(D)=\phi_{D}$
where 
$$\phi_{D}(t)=(x+y(-A^{2}-A^{-2}))^{-m(D)}t =(y+x(-A^{2}-A^{-2}))^{-m(D)}t$$ 
for any element $t\in R$ and $m(D)$ is the number of marked vertices of $D$.
It is easy to see that $\phi$ is an invariant under Yoshikawa moves except $\Gamma_{1}$, $\Gamma_{6}$ and $\Gamma_{6}'$
and satisfies the condition of Theorem \ref{Thm4}.
By Theorem \ref{Thm4}, there exists the invariant $P_{M}^{\phi}$ under Yoshikawa moves except for $\Gamma_{1}$.

Consider the map $\rho : {\mathcal F}^{ori} \rightarrow Map(R)$ defined in Example \ref{ExaMKBInvAll} and
one can show that $P_{M}^{\phi, \rho} : \mathcal{F} \rightarrow  R$ defined by $P_{M}^{\phi, \rho}(D) = \rho_{D}(P_{M}^{\phi,}(D))$ is invariant under all Yoshikawa moves 
by Theorem \ref{Thm3} and Remark \ref{RmkThm34}.
Note that when we use the map $\phi$ corresponding to markers, we can relieve the condition $x+y(-A^{2}-A^{-2}) = y+x(-A^{2}-A^{-2})=1$ to $x+y(-A^{2}-A^{-2}) = y+x(-A^{2}-A^{-2})$.

\end{example}

\begin{remark}
In fact, if a marked Kauffman bracket magma satisfies the condition $a_{n} * a_{n+1} = a_{n+1} * a_{n} = a_{n}$, 
then $P_{M}$ becomes an invariant under $\Gamma_{1}$.
The condition $a_{n} * a_{n+1} = a_{n+1} * a_{n} = a_{n}$ is quite bad 
because it makes the invariant almost trivial. 
Note that since the relation $(a_{n} * a_{n+1})*(a_{n+1}* a_{n})=a_{n+1}$ holds, 
we obtain $a_{n} * a_{n} = a_{n+1}$ by using $a_{n} * a_{n+1} = a_{n+1} * a_{n} = a_{n}$. 
Let $c$ be a crossing of a marked graph diagram $D$. 
Suppose that we already spliced all crossings except for the crossing $c$. 
Then by splicing $c$, we obtain $n$ and $n+1$ trivial components or $n+1$ and $n$ trivial components by $A$ and $B$ splicings, respectively. 
It follows that we obtain $a_{n} *  a_{n+1}$ or $a_{n+1} *a_{n}$ in the last step for the calculation of $P_{M}(D)$. 
That is, if the condition $a_{n} * a_{n+1} = a_{n+1} * a_{n} = a_{n}$ is satisfied, 
then we can reduce the value $P_{M}(D)$ in $(A; *, \bullet, \{a_{n}\})$. 
Sequentially, by using $a_{n} * a_{n+1} = a_{n+1} * a_{n} = a_{n}$ and $a_{n} * a_{n} = a_{n+1}$, 
one can reduce the value $P_{M}(D)$ quite a lot. 
Then the condition $a_{n} * a_{n+1} = a_{n+1} * a_{n} = a_{n}$ is not preferred and it might be better to avoid it if it is possible.
\end{remark}

\section*{Acknowledgement}
The first author was supported by Basic Science Research Program through the National Research Foundation of Korea(NRF) funded by the Ministry of Education(2018R1D1A3B07044086 and 2021R1I1A1A01049100). 
The second author was funded by Russian Foundation for Basic Research (RFBR) grants 19-51-51004 and 20-51-53022.

\end{document}